\pdfoutput=1
\documentclass[12pt]{amsart}

\usepackage[
text={440pt,575pt},
headheight=9pt,
centering
]{geometry}

\usepackage{amsmath, amssymb, amsthm, enumerate, bm}
\allowdisplaybreaks 
\usepackage{mathtools}

\usepackage{graphicx}
\usepackage{subcaption}
\usepackage[normalem]{ulem}

\usepackage[all]{xy}

\usepackage{multirow}

\usepackage{mathptmx}

\usepackage{xcolor}
\usepackage{hyperref}
\definecolor{darkblue}{RGB}{0,0,160}
\hypersetup{
	colorlinks,%
	citecolor=darkblue,%
	filecolor=black,%
	linkcolor=darkblue,%
	urlcolor=darkblue
}

\usepackage[capitalise]{cleveref}

\usepackage{comment} 

\makeatletter
\newcommand{\nolisttopbreak}{\vspace{\topsep}\nobreak\@afterheading}
\makeatother

\theoremstyle{definition}
\newtheorem{definition}{Definition}[section]

\newtheorem{theorem}[definition]{Theorem}
\newtheorem{proposition}[definition]{Proposition}
\newtheorem{lemma}[definition]{Lemma}
\newtheorem{corollary}[definition]{Corollary}

\newtheorem*{fact*}{Fact}
\newtheorem{remark}[definition]{Remark}
\newtheorem{example}[definition]{Example}

\newtheorem{problem}[definition]{Problem}

\newcommand{\A}{\mathcal{A}}

\newcommand{\C}{\mathcal{C}}

\newcommand{\Hc}{\mathcal{H}}

\newcommand{\M}{\mathcal{M}}

\newcommand{\NN}{\mathbb{N}}
\newcommand{\Nk}{\mathfrak{N}}

\newcommand{\RR}{\mathbb{R}}
\newcommand{\R}{\mathcal{R}}

\newcommand{\Pk}{\mathfrak{P}}

\newcommand{\ZZ}{\mathbb{Z}}
\newcommand{\Zk}{\mathfrak{Z}}
\newcommand{\QQ}{\mathbb{Q}}

\newcommand\nub{{\boldsymbol 0}}
\newcommand\ab{{\boldsymbol a}}
\newcommand\bb{{\boldsymbol b}}
\newcommand\cb{{\boldsymbol c}}

\newcommand\eb{{\boldsymbol e}}

\newcommand\hb{{\boldsymbol h}}

\newcommand\ub{{\boldsymbol u}}
\newcommand\vb{{\boldsymbol v}}
\newcommand\wb{{\boldsymbol w}}
\newcommand\xb{{\boldsymbol x}}
\newcommand\yb{{\boldsymbol y}}
\newcommand\zb{{\boldsymbol z}}

\DeclareMathOperator{\mndcl}{mn}

\DeclareMathOperator{\Sym}{Sym}

\DeclareMathOperator{\supp}{supp}

\DeclareMathOperator{\cone}{cone}

\DeclareMathOperator{\gp}{gp}

\DeclareMathOperator{\lin}{lin}

\DeclareMathOperator{\rank}{rank}
\DeclareMathOperator{\ind}{ind}

\DeclareMathOperator{\unit}{U}

\newcommand\defas{\coloneqq}

\makeatletter
\@namedef{subjclassname@2020}{%
	\textup{2020} Mathematics Subject Classification}
\makeatother

\title{
	On monoids up to symmetry
 }
\author[D. V. Le]{Dinh Van Le}
\address{Department of Mathematics, FPT University, Hanoi, Vietnam}
\email{dinhlv2@fe.edu.vn}

\author[T. R\"{o}mer]{Tim R\"{o}mer}
\address{Institut f\"ur Mathematik, Universit\"at Osnabr\"uck, 49069 Osnabr\"uck, Germany}
\email{troemer@uos.de}

\author[N. T. Vien]{Nguyen Thi Vien}
\address{Institut f\"ur Mathematik, Universit\"at Osnabr\"uck, 49069 Osnabr\"uck, Germany}
\email{thi.vien.nguyen@uni-osnabrueck.de}
\subjclass[2020]{Primary: 05E18; Secondary: 20B30, 20M30, 52B99}
\keywords{monoid, cone, equivariant, symmetric group}
\dedicatory{Dedicated to the memory of Professor J\"{u}rgen Herzog.}
\begin{document}
\begin{abstract}
We study monoids in $\ZZ^{(\NN)}$ that are invariant under the action of the infinite symmetric group $\Sym$. Our main result establishes a local--global principle characterizing equivariant finite generation for arbitrary $\Sym$-invariant monoids, extending earlier results that required additional assumptions. We further analyze local--global phenomena for other fundamental properties, including positivity, normality, seminormality, and simplicity. In addition, we obtain structural results for symmetric monoids, including characterizations of positivity and non-positivity, a description of their groups of units, and explicit formulas for the ranks of local symmetric monoids and stabilizing $\Sym$-invariant chains.
\end{abstract}

\maketitle

\section{Introduction}

This paper is motivated by the goal of developing a theory of polyhedral geometry up to symmetry, with a particular emphasis on monoids. This direction of research originates in \cite{KLR22}, where a general framework was introduced for studying subsets of infinite-dimensional ambient spaces that are invariant under the action of the infinite symmetric group $\Sym$. This framework is connected to developments in a wide range of areas, including algebraic statistics \cite{AHT12,AH07,HS12}, group theory \cite{C67}, representation theory \cite{CEF15,CF13,SS17}, convex optimization \cite{LC23}, and machine learning \cite{LD24,PC25}. Much of this work, including \cite{KLR22}, is inspired by the rapidly growing theory of symmetric ideals in infinite-dimensional polynomial rings. We refer to \cite{D14} for a comprehensive overview and to \cite{C87,D22,HKL18,KLS17,LNNR20,LNNR21,LR24,NR17,NR19} for several significant results in this area.

A key contribution of \cite{KLR22} is the development of a systematic approach to polyhedral objects, such as cones, monoids, and polytopes, in infinite-dimensional settings that are invariant under $\Sym$. The central idea is to relate a \emph{global} $\Sym$-invariant object
$A \subseteq \RR^{(\NN)}$ or $A \subseteq \ZZ^{(\NN)}$
to its associated chain of \emph{local} $\Sym(n)$-invariant objects
$\A = (A_n)_{n \ge 1}$, where $A_n = A \cap \RR^n$ or $A_n = A \cap \ZZ^n$
(precise definitions are recalled in \Cref{sec:preliminaries}.) Properties of the global object $A$ may then be studied through the asymptotic behavior of the local chain $\A$. This perspective leads naturally to the following guiding problem.

\begin{problem}\label{prob:local-global principle}
Characterize fundamental properties of global $\Sym$-invariant objects in terms of their associated chains of local objects.
\end{problem}

Research on \Cref{prob:local-global principle}, commonly referred to as \emph{local--global principles}, has so far focused primarily on equivariant finite generation. In this context, the principle asserts that a global object $A$ is $\Sym$-equivariantly finitely generated if and only if the associated chain $\A$ stabilizes and is eventually finitely generated. This result was first established for nonnegative cones and monoids in \cite{KLR22}, subsequently extended to arbitrary cones and certain classes of monoids in \cite{L25}, and further developed for lattices in \cite{LR24}. Despite this progress, a local--global principle for equivariant finite generation of general $\Sym$-invariant monoids has remained open. Moreover, structural properties beyond finite generation have largely resisted a systematic local--global analysis.

In contrast with symmetric cones, whose equivariant theory is by now well developed, encompassing local--global principles and equivariant analogues of classical results such as the Minkowski--Weyl theorem, Carathéodory’s theorem, and Gordan’s lemma (see \cite{KLR22,L25,LR23}), the study of monoids up to symmetry is still at an early stage. Existing results on $\Sym$-invariant monoids are largely confined to special classes \cite{KLR22,L25}. This limited understanding underscores the need for a more systematic investigation. Such an effort is further motivated by the central role of monoids in algebra and geometry, where they underpin the theory of monoid algebras, toric varieties, and related structures. In the classical finite-dimensional setting, affine monoids and their algebras have been studied extensively, leading to a rich theory with numerous applications; see, e.g., \cite{BG09,BGT02,BH97,BH98,BLR06,G84,GSW76,G89,H70,HRW98,H72,TH86}.

The present work takes a first systematic step toward a theory of monoids up to symmetry. One of our main contributions is a local--global principle characterizing equivariant finite generation for arbitrary $\Sym$-invariant monoids (\Cref{local-global:finite generation}). This result removes auxiliary assumptions imposed in earlier work and thus substantially generalizes the results of \cite{KLR22,L25}. Its proof requires new ideas, as methods developed for cones or nonnegative monoids do not extend to general monoids. In addition, we investigate local--global relationships for other fundamental properties of symmetric monoids, including positivity, normality, seminormality, and simplicity (see \Cref{local-global:positive,local-global:normal-seminormal,local-global:simplicial}), thereby providing further contributions to the study of \Cref{prob:local-global principle}.

Beyond local--global phenomena, we also study intrinsic structural properties of symmetric monoids. In particular, we characterize positivity and non-positivity (\Cref{positivity:global,nonpositivity:global}), describe the groups of units of $\Sym$-invariant monoids (\Cref{unit group:global}), and derive explicit formulas for the ranks of local symmetric monoids and stabilizing $\Sym$-invariant chains (\Cref{rank of monoids} and \Cref{rank of chain}). Together, these results provide a deeper understanding of monoids in infinite-dimensional spaces under symmetry.

The paper is organized as follows. In \Cref{sec:preliminaries}, we review basic notions concerning symmetric groups and symmetric monoids. \Cref{sec:positive and nonpositive} characterizes positive and non-positive symmetric monoids and describes their groups of units in both local and global settings. In \Cref{sec:rank}, we compute the ranks of local symmetric monoids and of stabilizing $\Sym$-invariant chains. Local--global principles for finite generation, normality, seminormality, and simplicity are established in \Cref{sec:local-global}. Finally, \Cref{sec:concluding remarks}  outlines directions for future research on monoids up to symmetry.

\section{Symmetric monoids}\label{sec:preliminaries}

In this section, we recall the notions of symmetric monoids and $\Sym$-invariant chains of monoids introduced in \cite{KLR22}. We begin by describing the ambient group $\ZZ^{(\NN)}$, where the monoids under consideration reside.

Let $\NN=\{1,2,\dots\}$ denote the set of positive integers, and let $\ZZ^\NN$ be the Cartesian product of copies of $\ZZ$ indexed by $\NN$. For any $n\in\NN$, we regard the abelian group $\ZZ^n$ as a subgroup of $\ZZ^\NN$ by identifying each element $(u_1,\dots,u_n)\in\ZZ^n$ with the element
$$(u_1,\dots,u_n,0,0,\dots)\in \ZZ^\NN.$$
This identification yields an increasing chain of subgroups
$$\ZZ \subset \ZZ^2 \subset \cdots \subset \ZZ^n \subset \cdots,$$ whose direct limit is the subgroup
$$\ZZ^{(\NN)}=\bigcup_{n\ge 1} \ZZ^n\subset \ZZ^\NN.$$ 
Thus, $\ZZ^{(\NN)}$ consists precisely of those elements of $\ZZ^\NN$ with finite support. Here, for an element $\ub=(u_1,u_2,\dots)\in\ZZ^\NN$, its \emph{support} is defined as
$$\supp(\ub)=\left\{i \in \NN\mid u_i \neq 0\right\} \subseteq\NN,$$
and the cardinality of $\supp(\ub)$ is called the \emph{support size} of $\ub$. When $\ub\in\ZZ^{(\NN)}$, we denote the \emph{coordinate sum} of $\ub$ by
$$s(\ub)=\sum_{i\in \NN}u_i.$$ 

The abelian group $\ZZ^{(\NN)}$ is free with canonical basis $\{\eb_i \mid i\in\NN\}$, where $\eb_i$ denotes the vector with $1$ in the $i$-th coordinate and $0$ elsewhere. For each $n\in\NN$, the vectors $\eb_1,\dots,\eb_n$ form the standard basis of $\ZZ^n$. We denote by $\epsilon_n$ the all-ones vectors in $\ZZ^n$.

The infinite-dimensional real vector spaces $\RR^\NN$ and $\RR^{(\NN)}$ are defined analogously to $\ZZ^\NN$ and $\ZZ^{(\NN)}$, respectively. For more details, the reader is referred to \cite{L25,LR23}.

\subsection{Symmetric groups}

For each $n\in\NN$, let $\Sym(n)$ denote the symmetric group on the set $[n]:=\{1,\dots,n\}$. Regarding $\Sym(n)$ as the stabilizer of $n+1$ in $\Sym(n+1)$, we obtain an ascending chain of finite symmetric groups
$$\Sym(1) \subset \Sym(2) \subset \cdots \subset \Sym(n) \subset \cdots,$$
whose direct limit is the infinite symmetric group
$$\Sym(\infty)=\bigcup_{n \geq 1} \Sym(n).$$
The group $\Sym(\infty)$ consists of all permutations of $\NN$ that fix all but finitely many elements. For simplicity, we write $\Sym:=\Sym(\infty)$.

There is a natural action of $\Sym$ on $\ZZ^{(\NN)}$ given by permuting coordinates. More precisely, for $\sigma \in \Sym$ and $\ub=\left(u_i\right)_{i \in \NN}=\sum_{i \in \NN} u_i \eb_i \in \ZZ^{(\NN)}$, we define
$$\sigma(\ub)=\left(u_{\sigma^{-1}(1)}, u_{\sigma^{-1}(2)}, \dots\right)=\sum_{i \in \NN} u_{\sigma^{-1}(i)} \eb_i=\sum_{i \in \NN} u_i \eb_{\sigma(i)}.$$
In particular, $\sigma\left(\eb_i\right)=\eb_{\sigma(i)}$ for all $i \in \NN$. For each $n\ge1$, this action restricts to an action of $\Sym(n)$ on $\ZZ^n$.

Given a subset $A\subseteq\ZZ^{(\NN)}$ and a subgroup $\Omega\subseteq\Sym$, let
$$\Omega(A)=\{\sigma(\ub) \mid \sigma \in \Omega, \ub \in A\}$$
denote the $\Omega$-orbit of $A$. We say that $A$ is \emph{$\Omega$-invariant} if $\Omega(A)\subseteq A$. In this paper, we mainly consider the cases $\Omega=\Sym$ and $\Omega=\Sym(n)$ for some $n\ge1$. An ascending chain
$$A_1 \subset A_2 \subset \cdots \subset A_n \subset \cdots,$$
where $A_n \subseteq \ZZ^n$ for all $n \geq 1$, is called \emph{$\Sym$-invariant} if each $A_n$ is $\Sym(n)$-invariant. It is straightforward to verify that the direct limit $A = \bigcup_{n \ge 1} A_n\subseteq \ZZ^{(\NN)}$ of such a chain is $\Sym$-invariant. Conversely, if $A\subseteq\ZZ^{(\NN)}$ is a $\Sym$-invariant subset, then the chain of truncations
$$A \cap \ZZ \subset A \cap \ZZ^2 \subset \cdots \subset A \cap \ZZ^n \subset \cdots$$
is $\Sym$-invariant. Throughout the paper, we refer to a subset or monoid $A \subseteq \ZZ^{(\NN)}$ as a \emph{global} object, while a subset or monoid $A_n \subseteq \ZZ^n$ is referred to as a \emph{local} object.

\subsection{Monoids}

All monoids considered in this paper are submonoids of the abelian group $\ZZ^{(\NN)}$, that is, subsets of $\ZZ^{(\NN)}$ containing the zero element and closed under addition. Let $M\subseteq\ZZ^{(\NN)}$ be a monoid. A subset $A\subseteq M$ is called a \emph{generating set} for $M$ if
$$M=\mndcl(A)\defas\ZZ_{\geq 0}A=\left\{\sum_{i=1}^k m_i \ab_i \mid \ab_i \in A, m_i \in \ZZ_{\geq 0}, k \in \NN\right\}.$$
We say that $M$ is \emph{affine} or \emph{finitely generated} if it admits a finite generating set. Evidently, if $M$ is affine, then $M\subseteq \ZZ^n$ for some $n\in\NN$.

Let $\gp(M)\subseteq \ZZ^{(\NN)}$ denote the \emph{group generated by $M$}. Then $\gp(M)$ is a free abelian group, and
\[
\gp(M)=\ZZ M=\{\ub-\vb\mid \ub,\vb \in M\}.
\]
The \emph{rank} of $M$ is defined to be the rank of the group $\gp(M)$. In other words, 
$$\rank M=\dim_\RR(\RR M),$$ 
where $\RR M$ denotes the $\RR$-vector space spanned by $M$. 

The \emph{normalization} of $M$ is the monoid
$$\widehat{M}=\left\{\ub \in \gp(M) \mid k \ub \in M \text { for some } k \in \NN\right\},$$
and $M$ is called \emph{normal} if $M=\widehat{M}.$ 

It is not hard to show that $\widehat{M}=\cone(M) \cap \gp(M)$, where $\cone(M)\defas \RR_{\ge 0}M$ denotes the \emph{cone} generated by $M$ (see, e.g., \cite[Proposition 2.22]{BG09}). 
This description of $\widehat{M}$ together with the following fundamental result, known as \emph{Gordan’s lemma} (see, e.g., \cite[Lemma~2.9]{BG09} or \cite[Proposition~6.1.2]{BH98}), implies that $\widehat{M}$ is an affine monoid whenever $M$ is affine.

\begin{theorem}
	\label{thm:classical-Gordan}
	Let $G\subseteq\ZZ^{n}$ be a subgroup and $C\subseteq\RR^{n}$ a finitely generated rational cone, i.e., $C=\RR_{\ge 0}A$ for some finite subset $A\subseteq\ZZ^{n}$. Then $C\cap G$ is a normal affine monoid.
\end{theorem}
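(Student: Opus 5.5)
The plan is the classical argument, which splits into three steps: a finite candidate generating set, a decomposition showing it generates, and normality.

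\emph{Finite generation.} Write $C=\RR_{\ge 0}A$ with $A=\{\ab_1,\dots,\ab_m\}\subseteq\ZZ^n$. First I reduce to generators lying in $G$. Let $V=\RR G$. Then $C\cap V$ is again a rational polyhedral cone: by the Minkowski--Weyl theorem it is an intersection of finitely many rational halfspaces and a rational subspace, hence generated by finitely many integral vectors. Since $G$ has finite index in $(\QQ G)\cap\ZZ^n$, suitable positive multiples of these vectors lie in $G$. So I may assume $C\subseteq\RR G$ and $\ab_1,\dots,\ab_m\in G$. Next consider the bounded parallelotope-type set
\[
P=\Bigl\{\sum_{i=1}^m t_i\ab_i \mid 0\le t_i<1\Bigr\}.
\]
The set $P\cap G$ is finite, because $G$ is discrete and $P$ is bounded.

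\emph{Generation.} I claim that $C\cap G$ is generated by $(P\cap G)\cup\{\ab_1,\dots,\ab_m\}$. Take $\xb\in C\cap G$ and write $\xb=\sum r_i\ab_i$ with $r_i\ge0$. Then
\[
\xb=\sum\lfloor r_i\rfloor\ab_i+\sum(r_i-\lfloor r_i\rfloor)\ab_i .
\]
The second summand equals $\xb-\sum\lfloor r_i\rfloor\ab_i$, which lies in $G$ and also in $P$. This proves the claim. The set $C\cap G$ is closed under addition and contains $0$, so it is an affine monoid.

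\emph{Normality.} Let $M=C\cap G$. Then $\gp(M)\subseteq G$. If $\ub\in\gp(M)$ and $k\ub\in M\subseteq C$ for some $k\in\NN$, then $\ub\in C$ because $C$ is a cone. Hence $\ub\in C\cap G=M$, so $M=\widehat{M}$.

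The main obstacle is the reduction step: $C$ may not lie in $\RR G$, and then the generators $\ab_i$ need not lie in $G$. This step needs the rationality of $C\cap\RR G$, which follows from Minkowski--Weyl/Farkas, together with the finite-index argument for clearing denominators.
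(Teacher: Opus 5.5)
Your proof is correct and is the classical argument: replace $C$ by the rational cone $C\cap\RR G$, which is generated by elements of $G$; then take the finitely many points of $G$ in the half-open parallelotope $P$, together with the $\ab_i$, as generators; normality follows from $\gp(M)\subseteq G$ and the fact that $C$ is a cone. The paper gives no proof of this theorem and cites Bruns--Gubeladze, Lemma~2.9, and Bruns--Herzog, Proposition~6.1.2; your argument is the standard proof found in that literature.
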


For an equivariant analogue of Gordan's lemma, we refer the reader to \cite{L25,LR23}.

The monoid $M$ is called \emph{seminormal} if $\ub\in M$ for every $\ub \in \gp(M)$ with $2\ub, 3\ub \in M$. Obviously, every normal monoid is seminormal. On the other hand, there exist seminormal monoids that are not normal (see, e.g., \cite[Example 2.56]{BG09}).

We say that $M$ is \emph{simplicial} if $\cone(M)$ is a simplicial cone, i.e., if $\cone(M)$ is generated by linearly independent elements. 
Note that
\[
\dim \cone(M)=\dim_\RR(\RR M)=\rank M.
\]
Therefore, when $\rank M$ is finite, any generating set of $\cone(M)$ contains at least $\rank M$ many elements, and $M$ is simplicial if and only if $\cone(M)$ has a generating set consisting of exactly $\rank M$ many elements.

The \emph{group of units} of $M$ is defined as
\[
\unit(M)=\{\ub \in M\mid -\ub \in M\}.
\]
If $\unit(M)=\{\nub\}$, then $M$ is called \emph{positive}. This is the case, for instance, when $M \subseteq \ZZ_{\ge 0}^{(\NN)}$.
Any positive affine monoid $M$ has a unique minimal generating set, called the \emph{Hilbert basis} of $M$ and denoted by $\Hc_M$  (see, e.g., \cite[Definition~2.15]{BG09}). 

Now suppose that $M \subseteq \ZZ^{(\NN)}$ is an $\Omega$-invariant monoid, where $\Omega=\Sym$ or $\Omega=\Sym(n)$ for some $n \geq 1$. An \emph{$\Omega$-equivariant generating set} for $M$ is a subset $A\subseteq M$ such that the $\Omega$-orbit of $A$ generates $M$:
$$M=\mndcl(\Omega(A))=\left\{\sum_{i=1}^k m_i \sigma_i\left(\ab_i\right) \mid \ab_i \in A, \sigma_i \in \Omega, m_i \in \ZZ_{\geq 0}, k \in \NN\right\}.$$
If $M$ has a finite $\Omega$-equivariant generating set, then it is said to be \emph{$\Omega$-equivariantly finitely generated} .

To study a global monoid $M \subseteq \ZZ^{(\NN)}$, it is often convenient to consider an \emph{ascending} chain of local monoids $\M=\left(M_n\right)_{n \geq 1}$ with $M_n \subseteq \ZZ^{n}$ such that $M$ is the direct limit of the chain, i.e., $M = \bigcup_{n \ge 1} M_n$. Such a chain $\M$ is called \emph{$\Sym$-invariant} if each $M_n$ is a $\Sym(n)$-invariant monoid in $\ZZ^n$. Evidently, the direct limit of a $\Sym$-invariant chain of local monoids is a $\Sym$-invariant monoid in $\ZZ^{(\NN)}$. Conversely, given a $\Sym$-invariant global monoid $M \subseteq \ZZ^{(\NN)}$, its truncations $(M\cap\ZZ^{n})_{n \ge 1}$ form a $\Sym$-invariant chain.

For any $\Sym$-invariant chain of monoids $\M=\left(M_n\right)_{n \geq 1}$, we always have 
$$\mndcl\left(\Sym(n)\left(M_m\right)\right) \subseteq M_n \quad \text { for all } n \geq m \geq 1.$$
We say that $\M$ \emph{stabilizes} if there exists $r \in \NN$ such that
$$M_n=\mndcl\left(\Sym(n)\left(M_m\right)\right) \quad \text {for all } n \geq m \geq r.$$
The smallest such $r$ is called the \emph{stability index} of $\M$ and is denoted by $\ind(\M)$.

Given a monoidal property $\mathcal{P}$ (e.g., finite generation, positivity, normality, etc.), a chain of monoids $\M = (M_n)_{n \ge 1}$ is said to \textit{satisfy $\mathcal{P}$} (respectively, \textit{eventually satisfy $\mathcal{P}$}) if $M_n$ satisfies $\mathcal{P}$ for all $n \ge 1$ (respectively, for all $n \ge N$ for some $N \in \NN$).

\section{Positive and non-positive symmetric monoids}\label{sec:positive and nonpositive} 

In this section, we establish criteria for determining the positivity and non-positivity of $\Sym$-invariant monoids, thereby extending the classical result for affine monoids (see, e.g., \cite[Proposition~2.16]{BG09}). As a further application, we give an explicit description of the groups of units of $\Sym$-invariant monoids, which yields additional insight into the structure of symmetric monoids. Our approach is to first analyze symmetric monoids in the local setting and then extend the results to the global context.

\subsection{Characterizations of positive and non-positive symmetric monoids}

In \cite[Section 5]{L25}, the first author provided classifications of non-positive symmetric normal monoids $M$ with $\gp(M)=\ZZ^{(\NN)}$ or $\gp(M)=\ZZ^{n}$. We now extend these results by developing characterizations of positivity and non-positivity for general symmetric monoids. In particular, this extension resolves \cite[Problem 5.11]{L25}.

We begin with a criterion for positivity in the local setting. Recall that for $\ub\in \ZZ^{(\NN)}$, the coordinate sum of $\ub$ is denoted by $s(\ub)$.

 \begin{theorem}\label{positivity:local}
     For $n\ge 1$, let $M_n\subseteq \ZZ^{n}$ be a $\Sym(n)$-invariant monoid. Then the following statements are equivalent:
     \begin{enumerate}
         \item 
         $M_n$ is positive;
         \item 
         Either $s(\ub)>0$ for all $\ub\in M_n\setminus\{\nub\}$, or $s(\ub)<0$ for all $\ub\in M_n\setminus\{\nub\}$.
     \end{enumerate}
 \end{theorem}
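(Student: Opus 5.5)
The implication (2)$\Rightarrow$(1) is immediate. If $\ub\in\unit(M_n)$ with $\ub\neq\nub$, then $-\ub\in M_n\setminus\{\nub\}$ and $s(-\ub)=-s(\ub)$, so $\ub$ and $-\ub$ have coordinate sums of opposite sign (or both zero). This contradicts either alternative in (2). Hence $\unit(M_n)=\{\nub\}$.

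For (1)$\Rightarrow$(2), assume $M_n$ is positive and use the symmetrization $\ub\mapsto\sum_{\sigma\in\Sym(n)}\sigma(\ub)$. By $\Sym(n)$-invariance this sum lies in $M_n$, and it equals $(n-1)!\,s(\ub)\,\epsilon_n$.

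\emph{Step 1: no nonzero element has coordinate sum $0$.} Suppose $\ub\in M_n\setminus\{\nub\}$ with $s(\ub)=0$. Then $\ub$ has two coordinates $u_i\neq u_j$. Let $\tau$ be the transposition $(i\,j)$; then $\ub-\tau(\ub)=(u_i-u_j)(\eb_i-\eb_j)\neq\nub$. We want to show that $\ub-\tau(\ub)$ lies in $M_n$, which would contradict positivity. The tool is that $\sum_{\sigma}\sigma(\ub)=\nub$, i.e.\ the symmetrized sum vanishes. Splitting this sum into the single term $\sigma=\mathrm{id}$ plus the rest gives
\[
-\ub=\sum_{\sigma\neq \mathrm{id}}\sigma(\ub)\in M_n .
\]
So $\ub$ and $-\ub$ both lie in $M_n$, i.e.\ $\ub\in\unit(M_n)\setminus\{\nub\}$, contradicting positivity. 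Therefore $s(\ub)\neq 0$ for every $\ub\in M_n\setminus\{\nub\}$. (The detour through $\ub-\tau(\ub)$ is unnecessary once we have this.)

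\emph{Step 2: all nonzero elements have the same sign.} Suppose $\ub,\vb\in M_n$ with $s(\ub)=a>0$ and $s(\vb)=-b<0$. Then $b\ub+a\vb\in M_n$ and $s(b\ub+a\vb)=0$, so by Step 1 we get $b\ub+a\vb=\nub$. Hence $a\vb=-b\ub$, and we must produce a nonzero unit from this. Since $b\geq 1$, we can write
\[
-\ub=(b-1)\ub+a\vb\in M_n ,
\]
so $\ub$ is a nonzero unit, contradicting positivity. This proves (2).

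The only delicate point is Step 1: recognizing that an element with $s(\ub)=0$ yields $-\ub$ as a sum of its nontrivial permutations. Once that is in place, Step 2 is a short combination argument.
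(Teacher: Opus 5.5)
Your proof is correct and follows the paper's argument. The paper proves the equivalent \Cref{nonpositivity:local}: your Step~1 is its Case~(i), via the orbit-sum identity of \Cref{claim:s(w)epsilon}, which sums only the $n$ cyclic shifts of $\ub$ where you sum over all of $\Sym(n)$. Your Step~2 is its Case~(ii), where your identity $-\ub=(b-1)\ub+a\vb$ plays the role of the paper's $\xb=s(\ub)\vb$ with $-\xb=-s(\vb)\ub$. The opening detour through $\ub-\tau(\ub)$ is superfluous and should simply be deleted.
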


We illustrate \Cref{positivity:local} with some natural classes of positive symmetric monoids.

 \begin{example}
 Observe that if $M_n\subseteq \ZZ^{n}$ is a $\Sym(n)$-invariant positive monoid such that $s(\ub)>0$ for all $\ub\in M_n\setminus\{\nub\}$, then 
 \[
 -M_n\defas\{\ub\mid-\ub\in M_n\}
 \]
 is also a positive monoid whose nonzero elements have negative coordinate sum. Therefore, it suffices to present examples of the former type.
     \begin{enumerate}
         \item For $n\ge 1$, the $\Sym(n)$-invariant monoid 
         \[ 
         \ZZ^n_{\ge 0}=\mndcl(\Sym(n)(\eb_1))
         \]
         is clearly positive. Moreover, any $\Sym(n)$-invariant submonoid $M_n\subseteq \ZZ^n_{\ge 0}$ is also positive and satisfies $s(\ub)>0$ for all $\ub\in M_n\setminus \{\nub\}$. 
         
         \item Let $n,d \ge 1$ and consider the monoid
         \[ 
         M_n=\{\nub\}\cup\{\ub \in \ZZ^n\mid s(\ub)\in d\NN\}.
         \]
         It is easy to check that $M_n$ is $\Sym(n)$-invariant. Hence, $M_n$ is a $\Sym(n)$-invariant positive monoid satisfying $s(\ub)>0$ for all $\ub\in M_n\setminus\{\nub\}$. 
     \end{enumerate}
 \end{example}
 
\Cref{positivity:local} is evidently equivalent to the following characterization of non-positive symmetric monoids. We therefore restrict our attention to prove the latter.

\begin{proposition}
\label{nonpositivity:local}
    For $n\ge 1$, let $M_n\subseteq \ZZ^{n}$ be a $\Sym(n)$-invariant monoid. Then $M_n$ is non-positive if and only if one of the following conditions holds:
	\begin{enumerate}
		\item  
        There exists $\wb\in M_n\setminus\{\nub\}$ such that $s(\wb)=0$.
		\item 
        There exist $\ub,\vb \in M_n$ such that $s(\ub)>0>s(\vb)$. 
	\end{enumerate}
\end{proposition}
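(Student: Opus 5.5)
The plan is to prove the two implications separately. The forward direction is almost formal. The converse needs one construction, namely producing a nonzero unit from either (1) or (2), and this is the only place where the $\Sym(n)$-invariance is really used.

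\emph{Necessity.} Suppose $M_n$ is non-positive, and pick $\ub\in\unit(M_n)$ with $\ub\neq\nub$, so that $\ub,-\ub\in M_n$. If $s(\ub)=0$, then (1) holds with $\wb=\ub$. Otherwise $s(\ub)$ and $s(-\ub)=-s(\ub)$ have opposite signs, so (2) holds with the pair $\ub,-\ub$ in the appropriate order. No symmetry is needed here.

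\emph{Sufficiency, case (1).} Let $\wb\in M_n\setminus\{\nub\}$ with $s(\wb)=0$. The key observation is that summing over the whole group symmetrizes. Put
\[
\ub\defas\sum_{\sigma\in\Sym(n)}\sigma(\wb).
\]
This lies in $M_n$ by invariance, and each of its coordinates equals $(n-1)!\,s(\wb)=0$, so $\ub=\nub$. Since $\wb\neq\nub$, the summand for $\sigma=\mathrm{id}$ is $\wb$. Hence
\[
-\wb=\sum_{\sigma\neq \mathrm{id}}\sigma(\wb)\in M_n,
\]
so $\wb$ is a nonzero unit and $M_n$ is not positive. (If $n=1$, a nonzero $\wb$ with $s(\wb)=0$ cannot exist, so this case is vacuous.)

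\emph{Sufficiency, case (2).} Let $\ub,\vb\in M_n$ with $s(\ub)=a>0>s(\vb)=-b$. Set
\[
\wb\defas b\ub+a\vb\in M_n,
\]
so that $s(\wb)=0$. If $\wb\neq\nub$, case (1) applies. If $\wb=\nub$, then $b\ub=-a\vb$, and
\[
-\ub=(b-1)\ub+a\vb\in M_n .
\]
Here $\ub\neq\nub$ because $s(\ub)>0$, so $\ub$ is a nonzero unit.

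The main obstacle is case (1): one must see that the sum over the entire orbit is the zero vector. Once that is in place, the rest is bookkeeping.
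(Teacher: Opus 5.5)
Your proof is correct and follows the paper's argument. The paper also splits necessity by the sign of $s(\ub)$. It handles case (i) by an orbit sum that vanishes when $s(\wb)=0$, summing only the $n$ cyclic shifts of $\wb$ rather than all of $\Sym(n)$. It reduces case (ii) to case (i) via $\wb=s(\ub)\vb-s(\vb)\ub$. The only cosmetic difference is in the degenerate case $\wb=\nub$: the paper exhibits $s(\vb)\ub$ as the nonzero unit, while you show directly that $\ub$ itself is a unit.
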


To prove this result, we first record a simple yet useful observation. Recall that $\epsilon_n$ denotes the all-ones vector in $\ZZ^{n}$.

\begin{lemma}
    \label{claim:s(w)epsilon}
    For $n\ge 1$, let $M_n\subseteq \ZZ^{n}$ be a $\Sym(n)$-invariant monoid. Assume that $\wb\in M_n$. 
    Then the following statements hold:
    \begin{enumerate}
        \item 
        $s(\wb)\epsilon_n\in M_n.$
        \item 
        If $s(-\wb)\epsilon_n\in M_n$, then $-\wb\in M_n$.
    \end{enumerate}
    
\end{lemma}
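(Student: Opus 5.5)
Both parts rest on one symmetrization identity. Summing a vector over all its coordinate permutations gives a multiple of the all-ones vector. For $\wb=(w_1,\dots,w_n)$ and each $i\in[n]$,
\[
\sum_{\sigma\in\Sym(n)} \sigma(\wb)_i=\sum_{\sigma\in\Sym(n)} w_{\sigma^{-1}(i)}=(n-1)!\,s(\wb),
\]
because each index $j$ equals $\sigma^{-1}(i)$ for exactly $(n-1)!$ permutations. Hence
\[
\sum_{\sigma\in\Sym(n)}\sigma(\wb)=(n-1)!\,s(\wb)\,\epsilon_n .
\]
This only yields a multiple of $s(\wb)\epsilon_n$, so the step that needs care is removing the factor $(n-1)!$. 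Rather than the full group sum, I would use the cyclic subgroup. Let $\tau=(1\,2\,\cdots\,n)$. Then
\[
\sum_{k=0}^{n-1}\tau^k(\wb)=s(\wb)\,\epsilon_n,
\]
since each coordinate of the left side runs over every $w_j$ exactly once.

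For (1): each $\tau^k(\wb)$ lies in $M_n$ by $\Sym(n)$-invariance. $M_n$ is closed under addition, so $s(\wb)\epsilon_n\in M_n$.

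For (2): assume $s(-\wb)\epsilon_n=-s(\wb)\epsilon_n\in M_n$. Write
\[
-\wb=\Bigl(\sum_{k=1}^{n-1}\tau^k(\wb)\Bigr)-s(\wb)\epsilon_n ,
\]
which follows directly from the cyclic identity. The first summand is a sum of elements of $M_n$, and the second term, $-s(\wb)\epsilon_n$, lies in $M_n$ by hypothesis. So $-\wb$ is a sum of elements of $M_n$ and hence $-\wb\in M_n$. The edge case $n=1$ is consistent: there $\wb=s(\wb)\epsilon_1$, the sum over $k$ is empty, and (2) reduces to the hypothesis itself.
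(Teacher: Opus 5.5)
Your proof is correct and is essentially the paper's argument. The paper likewise sums the $n$ cyclic shifts of $\wb$ to get $s(\wb)\epsilon_n$, and for (ii) writes $-\wb$ as the sum of the nontrivial shifts plus $s(-\wb)\epsilon_n$. Your detour through the full $\Sym(n)$-sum is unnecessary but harmless.
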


\begin{proof}
   (i) Suppose $\wb=(w_1,w_2,\dots,w_n)$. Consider the following cyclic permutations of $\wb$:
	\begin{align*}
		\wb_1=\wb&=(w_1,w_2,\dots,w_{n-1},w_n),\\
		\wb_2&=(w_2,w_3,\dots,w_n,w_1),\\
		&\vdots\\
		\wb_n&=(w_n,w_1,\dots,w_{n-2},w_{n-1}).
	\end{align*}
Since $M_n$ is $\Sym(n)$-invariant, we have $\wb_i \in M_n$ for all $i\in[n]$. Hence,
\begin{equation}
    \label{eq:cyclic}
    s(\wb)\epsilon_n=\wb_1+\cdots+\wb_n \in M_n.
\end{equation}

(ii) As $s(-\wb)=-s(\wb)$, it follows from \eqref{eq:cyclic} that
\[
-\wb=-\wb_1=\wb_2+\cdots+\wb_n+ s(-\wb)\epsilon_n\in M_n.
\qedhere
\]
\end{proof}

Let us now prove \Cref{nonpositivity:local}.

\begin{proof}[Proof of \Cref{nonpositivity:local}]
 Assume that $M_n$ is non-positive. Then there exists $\ub \in M_n\setminus\{\nub\}$ such that $-\ub \in M_n$. Clearly, either $s(\ub)=s(-\ub)=0$, or $s(\ub)$ and $s(-\ub)$ have opposite signs. Thus, one of the conditions (i) or (ii) holds.

 Conversely, we verify that each of the two conditions implies the non-positivity of $M_n$.

 \emph{Case (i)}: Suppose there exists $\wb\in M_n\setminus\{\nub\}$ such that $s(\wb)=0$. Then $s(-\wb)\epsilon_n=\nub\in M_n$. Applying \Cref{claim:s(w)epsilon}(ii), we get
    $-\wb \in M_n.$
    Hence, $M$ is non-positive.

    \emph{Case (ii)}: Suppose there exist $\ub,\vb \in M_n$ such that $s(\ub)>0>s(\vb)$. Let
    \[
    \wb=s(\ub)\vb - s(\vb)\ub.
    \]
    Then $\wb \in M_n$ and $s(\wb)=0$. If $\wb\ne\nub$, then we are in the situation of Case~(i), and hence $M_n$ is non-positive. Otherwise, if $\wb=\nub$, then 
    $$\xb\defas s(\vb)\ub=s(\ub)\vb  \in M_n.$$ 
    We have $s(\xb)=s(\ub)s(\vb)<0$, so $\xb \ne 0$. Moreover, $- \xb=(-s(\vb))\ub \in M_n$, since $s(\vb)<0$. Therefore, $M_n$ is also non-positive in this case.
\end{proof}

\begin{remark}\label{local positivity: non-equivalent}
	The conditions (i) and (ii) in \Cref{nonpositivity:local} are, in general, not equivalent. Indeed, for $n\ge 1$, consider a monoid $M_n\subseteq \ZZ^n$ that is $\Sym(n)$-equivariantly generated by an element $\ub\in\ZZ^n\setminus\{\nub\}$ with $s(\ub)=0$, i.e.,
    $$M_n=\mndcl(\Sym(n)(\ub)).$$ 
    It is straightforward to verify that every element of $M_n$ then has coordinate sum equal to 0. Thus, (i) does not imply (ii).
    
    Conversely, consider the monoid
    $$M_n=\left\lbrace (a,a,\dots,a) \in \ZZ^n \mid a \in \ZZ\right\rbrace=\mndcl(\epsilon_n,-\epsilon_n).$$ 
    This monoid is $\Sym(n)$-invariant and contains elements $\pm \epsilon_n$ with $s(\epsilon_n)>0>s(-\epsilon_n)$, but it has no non-zero elements whose coordinate sum is 0. Therefore, (ii) does not implies (i) neither.
\end{remark}

Next, we characterize when a $\Sym(n)$-invariant monoid $M_n$ coincides with the entire group $\ZZ^n$. Note that an analogous characterization of $\RR^n$ as a symmetric cone can be found in \cite[Lemma 5.6]{L25}.

\begin{proposition}
 \label{M=Z^n}
     For $n\ge 2$, let $M_n\subseteq \ZZ^{n}$ be a $\Sym(n)$-invariant monoid. Then the following statements are equivalent:
     \begin{enumerate}
         \item 
         $M_n=\ZZ^{n}$;
         \item 
         There exists $\ub,\vb \in M_n$ with $|\supp(\ub)|,|\supp(\vb)| <n$ such that $s(\ub)>0>s(\vb)$ and $\gcd(s(\ub),s(\vb))=1$.
     \end{enumerate}   
\end{proposition}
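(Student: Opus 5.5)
The implication (i)$\Rightarrow$(ii) is immediate. If $M_n=\ZZ^n$, take $\ub=\eb_1$ and $\vb=-\eb_1$. Both have support size $1<n$, since $n\ge 2$, and $s(\ub)=1>0>-1=s(\vb)$ with $\gcd(1,-1)=1$. The real work is (ii)$\Rightarrow$(i), which I plan to do in three steps: first show that $M_n$ contains $\pm\epsilon_n$, then that $M_n$ is a group, and finally that this group is all of $\ZZ^n$.

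\emph{Step 1: $\pm\epsilon_n\in M_n$.} By \Cref{claim:s(w)epsilon}(i), both $s(\ub)\epsilon_n$ and $s(\vb)\epsilon_n$ lie in $M_n$. Put $p=s(\ub)>0$ and $q=-s(\vb)>0$. Since $\gcd(p,q)=1$, there are nonnegative integers $a,b$ with $ap-bq=1$, and nonnegative integers $a',b'$ with $a'p-b'q=-1$; this is standard, by shifting a Bézout solution by multiples of $(q,p)$. Hence
\[
\epsilon_n=a\,s(\ub)\epsilon_n+b\,s(\vb)\epsilon_n\in M_n,
\qquad
-\epsilon_n=a'\,s(\ub)\epsilon_n+b'\,s(\vb)\epsilon_n\in M_n .
\]
Consequently $\ZZ\epsilon_n\subseteq M_n$.

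\emph{Step 2: $M_n$ is a group.} Let $\wb\in M_n$ be arbitrary. By Step 1, $s(-\wb)\epsilon_n\in M_n$, so \Cref{claim:s(w)epsilon}(ii) gives $-\wb\in M_n$. Thus $M_n$ is a $\Sym(n)$-invariant subgroup of $\ZZ^n$ that contains $\ub$ and $\vb$.

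\emph{Step 3: the group is $\ZZ^n$.} This is the main step, and it is where the support hypothesis enters. Write $\ub=(u_1,\dots,u_n)$ and let $\tau=(i\,j)$ be a transposition. Then
\[
\ub-\tau(\ub)=(u_i-u_j)(\eb_i-\eb_j)\in M_n .
\]
Because $|\supp(\ub)|<n$, some coordinate $u_k$ equals $0$. Taking $j=k$ and using invariance to move the resulting vectors around, $M_n$ contains $u_i(\eb_1-\eb_2)$ for every $i$. Hence $M_n$ contains $g_u(\eb_1-\eb_2)$, where $g_u=\gcd(u_1,\dots,u_n)$; note that $g_u\ne 0$ because $s(\ub)\neq0$. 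Similarly $g_{\vb}(\eb_1-\eb_2)\in M_n$. Since $g_u\mid s(\ub)$ and $g_v\mid s(\vb)$, we get $\gcd(g_u,g_v)=1$, and therefore $\eb_1-\eb_2\in M_n$. By invariance, all $\eb_i-\eb_j$ lie in $M_n$.

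Modulo the subgroup generated by the $\eb_i-\eb_j$, every vector $\wb$ is congruent to $s(\wb)\eb_1$. In particular $s(\ub)\eb_1$ and $s(\vb)\eb_1$ lie in $M_n$, and their coprimality gives $\eb_1\in M_n$. Then every $\eb_i\in M_n$ by invariance, so $M_n=\ZZ^n$.

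I expect Step 3 to be the main obstacle. The point is that without a zero coordinate, $\ub-\tau(\ub)$ only yields multiples of $\eb_i-\eb_j$ by differences of coordinates, whose gcd need not divide $s(\ub)$. This is presumably why the support condition appears in (ii).
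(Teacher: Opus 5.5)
Your proof is correct, but after Step~1 it takes a different route from the paper. Step~1 is essentially the first half of the paper's \Cref{dZ^n}. There the paper forms $\xb=a\ub+b\vb$ and $\yb=p\ub+q\vb$ with coordinate sums $\pm 1$ and applies the cyclic-sum trick of \Cref{claim:s(w)epsilon}(i); by linearity this is the same as your combination of $s(\ub)\epsilon_n$ and $s(\vb)\epsilon_n$. The paper then finishes without ever using group structure. It permutes $\ub$ and $\vb$ separately so that both lie in $\ZZ^{n-1}$, reruns the cyclic-sum argument in the $\Sym(n-1)$-invariant monoid $M_n\cap\ZZ^{n-1}$ to get $\pm\epsilon_{n-1}\in M_n$, and concludes $\pm\eb_n=\pm(\epsilon_n-\epsilon_{n-1})\in M_n$. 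You instead first show that $M_n$ is a group via \Cref{claim:s(w)epsilon}(ii), which is essentially the argument the paper later uses for \Cref{unit group:local}. You then work in the lattice: transposition differences anchored at a zero coordinate give $u_i(\eb_1-\eb_2)$, and since the gcd of the coordinates divides the coordinate sum, coprimality yields $\eb_1-\eb_2$. Reducing modulo $\Zk(\ZZ^n)$ then gives $s(\ub)\eb_1$ and $s(\vb)\eb_1$, hence $\eb_1$. The paper's route is shorter, uses only nonnegative combinations, and directly proves the more general statement $\pm d\eb_i\in M_n$ for $d=\gcd(s(\ub),s(\vb))$, which it reuses later. Yours is longer but makes the role of the support hypothesis more visible: without a zero coordinate, the transposition differences only give multiples of $\eb_i-\eb_j$ by coordinate differences. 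That is exactly what goes wrong in the paper's example with $(2,0,0)$ and $(-3,1,1)$, where $M_3$ is a group containing $2(\eb_1-\eb_2)$ but not $\eb_1-\eb_2$.
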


This result follows readily from the following lemma.

\begin{lemma}\label{dZ^n}
     For $n\ge 2$, let $M_n\subseteq \ZZ^{n}$ be a $\Sym(n)$-invariant monoid. Suppose that there exist $\ub,\vb \in M_n$ such that $|\supp(\ub)|,|\supp(\vb)| <n$ and $s(\ub)>0>s(\vb)$. Then $\pm d\eb_i \in M_n$ for all $i\in[n]$, where $d=\gcd(s(\ub),s(\vb))$.
\end{lemma}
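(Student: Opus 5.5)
The plan is to manufacture elements with zero coordinate sum, use them to move mass between coordinates, and then correct by multiples of $\epsilon_n$. Since $|\supp(\ub)|<n$, there is an index $j$ with $u_j=0$. Let $\tau$ be the transposition swapping $j$ with some $i\in\supp(\ub)$. Then $\ub-\tau(\ub)=u_i(\eb_i-\eb_j)$ is nonzero with coordinate sum $0$, but it lies in $\gp(M_n)$, not necessarily in $M_n$.

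The first step is therefore to show that every element $\wb$ of $\gp(M_n)$ with $s(\wb)=0$ lies in $M_n$. The tool is \Cref{claim:s(w)epsilon}. From $\ub$ and $\vb$ we get $s(\ub)\epsilon_n\in M_n$ and $s(\vb)\epsilon_n\in M_n$, with opposite signs. Hence $\pm s(\ub)s(\vb)\epsilon_n\in M_n$, so $M_n$ contains the group $\ZZ\, d'\epsilon_n$ for some $d'\ne 0$.

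For $\wb=\xb-\yb$ with $\xb,\yb\in M_n$ and $s(\xb)=s(\yb)$, I would write $-\yb$ as a sum of the other cyclic shifts of $\yb$ plus $s(-\yb)\epsilon_n$, exactly as in the proof of \Cref{claim:s(w)epsilon}(ii). This does not directly work, since $s(-\yb)\epsilon_n$ need not be in $M_n$. Instead I will use $\xb$: the cyclic shifts of $\xb$ sum to $s(\xb)\epsilon_n=s(\yb)\epsilon_n$. So
$$\wb=\xb-\yb=\xb+(\yb_2+\cdots+\yb_n)-s(\yb)\epsilon_n=\xb+(\yb_2+\dots+\yb_n)-(\xb_1+\dots+\xb_n),$$
where $\xb_1=\xb$. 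This still involves a subtraction. The fix is to add a large multiple of $d'\epsilon_n$ to absorb the sign issues: $-s(\yb)\epsilon_n$ lies in $M_n$ once we note that $s(\yb)$ can be arranged divisible by $d'$. To arrange this, I would replace $\xb,\yb$ by $\xb+\zb$ and $\yb+\zb$, with $\zb$ chosen in $M_n$ to make the sum a multiple of $d'$. I expect this to be the fiddly step.

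The cleaner route, which I would try first, avoids general $\wb$. I work directly with $\ab=u_i(\eb_i-\eb_j)$ and need only $\pm\ab\in M_n$. Writing
$$\ab=\ub-\tau(\ub)=\ub+\bigl(\text{other shifts of }\tau(\ub)\bigr)-s(\ub)\epsilon_n,$$
we have $-s(\ub)\epsilon_n=|s(\vb)|^{-1}$-scaled... Concretely, $s(\ub)\cdot s(\vb)$-type multiples handle this. Multiply by $k=|s(\vb)|$: then $k\ab$ is a sum of $M_n$-elements plus $-s(\ub)|s(\vb)|\epsilon_n=s(\ub)s(\vb)\epsilon_n\in M_n$. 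This shows all permutations of $\pm k u_i(\eb_i-\eb_j)$ lie in $M_n$. Hence $M_n$ contains the lattice $L=c\,\{\xb:s(\xb)=0\}$ for some $c\ne0$, and by symmetry $-\ab$ works too.

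It remains to pass from a subgroup of finite index in the zero-sum lattice to $\pm d\eb_i$. The idea is to move mass: from $\ub$ plus suitable elements of $L$ one reaches elements concentrated on few coordinates, but with index $c$ this only controls coordinates mod $c$. I therefore expect the main obstacle to be getting the full zero-sum lattice $\{\xb\in\ZZ^n: s(\xb)=0\}$ inside $M_n$, rather than only a multiple of it.

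I would attack this by showing $\eb_i-\eb_j\in M_n$ through combining the transposition differences of $\ub$ and $\vb$ at coordinates with gcd-type Bezout coefficients, using the free coordinate $j$. Once $\eb_i-\eb_j\in M_n$ for all $i\ne j$, the element $\ub$ can be rearranged into $s(\ub)\eb_i$, and $\vb$ into $s(\vb)\eb_i=-|s(\vb)|\eb_i$. Bezout's identity on $s(\ub)>0$ and $s(\vb)<0$ then yields $\pm d\eb_i$, as both nonnegative combinations $a\,s(\ub)\eb_i+b\,s(\vb)\eb_i$ with $a,b\ge0$ realize $\pm d$.
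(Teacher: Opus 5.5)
\textbf{There is a genuine gap.} The decisive step aims at a false statement. You want to show $\eb_i-\eb_j\in M_n$, that is, that the full zero-sum lattice $\{\xb\in\ZZ^n : s(\xb)=0\}$ lies in $M_n$. You would then rearrange $\ub$ into $s(\ub)\eb_i$. This does not follow from the hypotheses. Take $M_n=2\ZZ^n$ with $\ub=2\eb_1$ and $\vb=-2\eb_1$. All hypotheses hold with $d=2$, and the conclusion $\pm 2\eb_i\in M_n$ is true, but $\eb_1-\eb_2\notin\gp(M_n)$. When $d>1$ the zero-sum part of $M_n$ can be a proper finite-index sublattice, so no Bezout combination of transposition differences will produce $\eb_i-\eb_j$.

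A related symptom is that you use the support hypothesis only to produce a transposition difference, which exists anyway. Yet the hypothesis is essential. In the paper's example $M_3=\mndcl(\Sym(3)((2,0,0),(-3,1,1)))$, take $\ub=(2,0,0)$ and $\vb=(-3,1,1)$, so $d=1$ and only $\vb$ has full support. Every step you actually carry out still goes through: the multiples of $\epsilon_3$, the element $k\ab$ built from a zero coordinate of $\ub$, and the lattice $L$. But $M_3=\{\xb\in\ZZ^3 : x_1\equiv x_2\equiv x_3 \pmod 2\}$, so both $\eb_1\notin M_3$ and $\eb_1-\eb_2\notin M_3$. Any correct closing argument must therefore use that \emph{both} $\ub$ and $\vb$ have support size less than $n$, and your sketch does not say how.

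\textbf{The missing idea} is to apply \Cref{claim:s(w)epsilon}(i) in two ambient dimensions; no zero-sum lattice is needed.
\begin{enumerate}
\item Bezout with positive coefficients gives $\xb=a\ub+b\vb$ and $\yb=p\ub+q\vb$ in $M_n$ with $s(\xb)=d$ and $s(\yb)=-d$. Hence $\pm d\epsilon_n\in M_n$, which is sharper than your $\ZZ\,d'\epsilon_n$.
\item Before forming $\xb,\yb$, permute $\ub$ and $\vb$ separately into $\ZZ^{n-1}$. This is allowed by $\Sym(n)$-invariance, and it is exactly where the support hypothesis on both elements enters.
\item Then $\xb,\yb$ lie in the $\Sym(n-1)$-invariant monoid $M_n\cap\ZZ^{n-1}$, so the same lemma gives $\pm d\epsilon_{n-1}\in M_n$.
\item Therefore $\pm d\eb_n=\pm(d\epsilon_n-d\epsilon_{n-1})\in M_n$, and symmetry gives all $\pm d\eb_i$.
\end{enumerate}
Incidentally, once $\pm d\epsilon_n\in M_n$ is available, \Cref{claim:s(w)epsilon}(ii) already gives $-\tau(\ub)\in M_n$. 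So your element $\ab$ itself lies in $M_n$, not just $k\ab$.
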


\begin{proof}
By the extended Euclidean algorithm, there exist positive integers $a,b,p,q$ such that 
$$as(\ub)+bs(\vb)=d \quad \text{and} \quad ps(\ub)+qs(\vb)=-d.$$
Set
$$\xb =a\ub+b\vb \quad \text{and} \quad \yb=p\ub +q\vb.$$
Then $\xb,\yb \in M_n$ with $s(\xb)=d$ and $s(\yb)=-d$. By \Cref{claim:s(w)epsilon}(i), we obtain 
$$d\epsilon_n=s(\xb)\epsilon_n\in M_n \quad \text{and} \quad -d\epsilon_n=s(\yb)\epsilon_n\in M_n.$$ 
Since $|\supp(\ub)|,|\supp(\vb)| <n$ and $M_n$ is $\Sym(n)$-invariant, we may permute the coordinates of $\ub,\vb$ so that $\ub,\vb \in M_n \cap \ZZ^{n-1}$. Consequently, $\xb,\yb \in M_n \cap \ZZ^{n-1}$. Note that $M_n \cap \ZZ^{n-1}$ is a $\Sym(n-1)$-invariant submonoid of $\ZZ^{n-1}$. So applying \Cref{claim:s(w)epsilon}(i) to the elements $\xb$ and $\yb$ of this monoid, we deduce that $\pm d\epsilon_{n-1} \in  M_n \cap \ZZ^{n-1}$. Hence,
$$d\eb_n=d\epsilon_n+(-d\epsilon_{n-1})\in M_n \quad \text{and} \quad -d\eb_n=-d\epsilon_n+d\epsilon_{n-1} \in M_n.$$
Finally, since $M_n$ is $\Sym(n)$-invariant, it follows that $\pm d\eb_i \in M_n$ for all $i\in [n]$, as desired.
\end{proof}

As demonstrated by the following example, the condition $|\supp(\ub)|,|\supp(\vb)| <n$ in \Cref{M=Z^n}(ii) is essential and cannot be dropped.

\begin{example}
    Consider the monoid $M_3=\mndcl(\Sym(3)(\xb,\yb))\subseteq \ZZ^{3}$ with $$\xb=(2,0,0) \quad \text{and} \quad \yb=(-3,1,1).$$  
    We have 
    $$s(\xb)=2>0, s(\yb)=-1<0\text{, and }\gcd(s(\xb),s(\yb))=1.$$ 
    However, it is easy to see that any element of $M_3$ with support size less than $3$ must have even coordinate sum. Thus, 
    $$\eb_1\not\in M_3\text{, and so  }M_3 \subsetneq \ZZ^{3}.
    $$
\end{example}

We now extend the criteria for local positivity and non-positivity to the global setting. 

\begin{theorem}\label{positivity:global}
    Let $M\subseteq \ZZ^{(\NN)}$ be a $\Sym$-invariant monoid. Then the following statements are equivalent:
    \begin{enumerate}
        \item 
        $M$ is positive;
        \item 
        Either $s(\ub)>0$ for all $\ub\in M\setminus\{\nub\}$, or $s(\ub)<0$ for all $\ub\in M\setminus\{\nub\}$.
    \end{enumerate}
\end{theorem}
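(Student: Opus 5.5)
The plan is to reduce to the local statement \Cref{positivity:local} via the truncations $M_n = M\cap\ZZ^n$. Each $M_n$ is a $\Sym(n)$-invariant submonoid of $\ZZ^n$, and $M=\bigcup_{n\ge1}M_n$.

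For (ii)$\Rightarrow$(i), no symmetry is needed. Suppose $\ub\in M\setminus\{\nub\}$ and $-\ub\in M$. Then $s(\ub)$ and $s(-\ub)=-s(\ub)$ cannot both be positive, nor both negative. This contradicts (ii).

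For (i)$\Rightarrow$(ii), assume $M$ is positive. Then every $M_n$ is positive, since $\unit(M_n)\subseteq\unit(M)=\{\nub\}$. By \Cref{positivity:local}, for each $n$ there is a sign $\delta_n\in\{+1,-1\}$ such that $\delta_n s(\ub)>0$ for all $\ub\in M_n\setminus\{\nub\}$.

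The only real point is that this sign is uniform in $n$. If $M=\{\nub\}$ there is nothing to prove. Otherwise, fix $\wb\in M\setminus\{\nub\}$, say $\wb\in M_{n_0}$. Then $\wb\in M_n\setminus\{\nub\}$ for every $n\ge n_0$, because the chain is ascending. Hence $\delta_n$ equals the sign of $s(\wb)$ for all $n\ge n_0$; call this common sign $\delta$.

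Now let $\ub\in M\setminus\{\nub\}$ be arbitrary. It lies in $M_n$ for some $n\ge n_0$, so $\delta s(\ub)>0$, which is (ii).

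Alternatively, one can argue directly via \Cref{nonpositivity:local}. If (ii) fails, there are elements of $M$ with opposite signs of $s$, or a nonzero element with $s=0$. These elements lie in a common $M_n$, so that $M_n$ is non-positive, and hence $M$ is non-positive.

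I expect no serious obstacle here. The only care needed is the consistency of signs across $n$, which the ascending-chain argument above handles.
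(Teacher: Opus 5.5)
Your proof is correct and follows essentially the paper's route. The paper derives the theorem from \Cref{nonpositivity:global}, whose relevant implications (a nonzero element with $s=0$, or two elements whose values of $s$ have opposite signs, force a nonzero unit) are obtained by running the local argument of \Cref{nonpositivity:local} inside a truncation $M\cap\ZZ^n$ that contains the elements in question. That is exactly your alternative argument, and your sign-consistency argument via \Cref{positivity:local} is a repackaging of the same reduction. The paper also proves the global-only fact that any nonzero unit $\ub$ yields the nonzero zero-sum element $\ub+\sigma(-\ub)$, but, as you observe, this is not needed here because (ii)$\Rightarrow$(i) is immediate.
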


This criterion for the positivity of global symmetric monoids is entirely analogous to the local case given in \Cref{positivity:local}. It is derived from the following characterization of global non-positivity, which differs slightly from its local counterpart in \Cref{nonpositivity:local}.

\begin{proposition}\label{nonpositivity:global}
    Let $M\subseteq \ZZ^{(\NN)}$ be a $\Sym$-invariant monoid. Consider the following statements:
	\begin{enumerate}
    \item 
    $M$ is non-positive.
	\item  
    There exists $\wb\in M\setminus\{\nub\}$ such that $s(\wb)=0$.
	\item 
    There exist $\ub,\vb \in M$ such that $s(\ub)>0>s(\vb)$. 
	\end{enumerate}
    Then (i) and (ii) are equivalent, and (iii) implies both (i) and (ii). 
\end{proposition}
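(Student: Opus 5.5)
The plan is to reduce everything to the local statement, \Cref{nonpositivity:local}, applied to the truncations $M_n = M\cap\ZZ^n$. These are $\Sym(n)$-invariant monoids with $M=\bigcup_n M_n$. The one new ingredient is that in the global setting we may enlarge $n$ at will. This lets us turn a nonzero element of sum zero into a unit even in a smaller truncation, and it handles the issue that condition (ii) of the local result can fail in some truncations.

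\emph{(i) $\Rightarrow$ (ii).} Suppose $M$ is non-positive, and take $\ub\in M\setminus\{\nub\}$ with $-\ub\in M$. If $s(\ub)=0$ we are done with $\wb=\ub$. Otherwise, we can still produce a nonzero element of sum zero. Pick $n$ with $\ub\in\ZZ^{n}$ and a permutation $\sigma\in\Sym$ moving the support of $\ub$ into $\{n+1,\dots,2n\}$. Set $\wb=\ub+\sigma(-\ub)=\ub-\sigma(\ub)\in M$. Then $s(\wb)=0$, and $\wb\neq\nub$ because the two summands have disjoint nonempty supports.

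\emph{(ii) $\Rightarrow$ (i).} Take $\wb\in M\setminus\{\nub\}$ with $s(\wb)=0$, and choose $n$ with $\wb\in M_n$. Then $M_n$ is a $\Sym(n)$-invariant monoid, and $s(-\wb)\epsilon_n=\nub\in M_n$. By \Cref{claim:s(w)epsilon}(ii) we get $-\wb\in M_n\subseteq M$, so $M$ is non-positive. (Equivalently, one can invoke Case (i) of \Cref{nonpositivity:local}.)

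\emph{(iii) $\Rightarrow$ (i).} Let $\ub,\vb\in M$ with $s(\ub)>0>s(\vb)$, and choose $n$ with $\ub,\vb\in M_n$. Then condition (ii) of \Cref{nonpositivity:local} holds for $M_n$, so $M_n$ is non-positive. Hence $M$ contains a nonzero unit and is non-positive. Combined with (i) $\Leftrightarrow$ (ii), this also gives (iii) $\Rightarrow$ (ii).

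The only step that is not immediate is (i) $\Rightarrow$ (ii), which uses the disjoint-support trick. This trick is exactly what fails locally, as \Cref{local positivity: non-equivalent} shows with $\pm\epsilon_n$. I would also mention that (i) does not imply (iii) in general: the example $\mndcl(\Sym(\ub))$ with $s(\ub)=0$ is non-positive, yet every element has sum $0$. This explains why only one-directional implications are claimed for (iii).
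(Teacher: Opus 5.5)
Your proof is correct and matches the paper's argument. The key step (i)$\Rightarrow$(ii) uses the same disjoint-support element $\ub+\sigma(-\ub)$. For the remaining implications the paper just says to argue as in \Cref{nonpositivity:local}, and you make that explicit by applying the local result to the truncations $M\cap\ZZ^n$.
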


\begin{proof}
Arguing as in the proof of \Cref{nonpositivity:local}, it suffices to justify the implication (i)$\Rightarrow$(ii). Assume that $M$ is non-positive. Then $\pm\ub \in M$ for some $\ub\ne\nub$. Since $\supp(\ub)$ is finite, we may choose $n\gg0$ such that $\ub \in \ZZ^n$. Write $\ub=(u_1,\dots,u_n)$ and define
\[
\wb=(u_1,\dots,u_n,-u_1,\dots,-u_n)\in \ZZ^{(\NN)}.
\]
Then $\wb=\ub+\sigma(-\ub)$ for some $\sigma\in \Sym$. It follows that  $\wb\in M$ since $M$ is $\Sym$-invariant. 
Clearly, $\wb \ne 0$ and $s(\wb)=0$.
This proves (ii), and hence completes the proof.
\end{proof}

\begin{remark}
	A reasoning similar to the one presented in \Cref{local positivity: non-equivalent} shows that, in general, neither condition (i) nor condition (ii) implies condition (iii) in \Cref{nonpositivity:global}.
\end{remark}

As a global counterpart of \Cref{M=Z^n}, we obtain the following characterization of $\ZZ^{(\NN)}$ as a symmetric monoid. An analogous result for cones appears in \cite[Lemma 5.3]{L25}.

\begin{proposition}\label{M=Z^N}
    Let $M\subseteq \ZZ^{(\NN)}$ be a $\Sym$-invariant monoid. Then the following statements are equivalent: 
    \begin{enumerate}
        \item 
        $M=\ZZ^{(\NN)}$;
        \item 
        There exist $\ub,\vb \in M$ such that $s(\ub)>0>s(\vb)$ and $\gcd(s(\ub),s(\vb))=1$.
    \end{enumerate}
\end{proposition}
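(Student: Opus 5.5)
The implication (i)$\Rightarrow$(ii) is immediate: if $M=\ZZ^{(\NN)}$, take $\ub=\eb_1$ and $\vb=-\eb_1$. Then $s(\ub)=1>0>-1=s(\vb)$ and $\gcd(1,-1)=1$.

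For (ii)$\Rightarrow$(i), the plan is to reduce to \Cref{M=Z^n}, or more directly to \Cref{dZ^n}, by passing to a truncation of large size. Let $\ub,\vb\in M$ be as in (ii). Both have finite support, so choose $n$ with $n>\max\{|\supp(\ub)|,|\supp(\vb)|\}$; after applying suitable permutations from $\Sym$ we may assume $\ub,\vb\in\ZZ^{n-1}\subset\ZZ^n$. Take $n\ge 2$.

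Now consider the truncation $M_n\defas M\cap\ZZ^n$ for each such $n$. It is a $\Sym(n)$-invariant monoid containing $\ub$ and $\vb$, and both have support size less than $n$. Since $d=\gcd(s(\ub),s(\vb))=1$, \Cref{dZ^n} gives $\pm\eb_i\in M_n$ for all $i\in[n]$. Hence $M_n=\ZZ^n$; equivalently, \Cref{M=Z^n} applies directly.

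This holds for every sufficiently large $n$, because enlarging $n$ keeps the support condition valid. Therefore
\[
M=\bigcup_{n}M\cap\ZZ^n\supseteq\bigcup_{n\gg0}\ZZ^n=\ZZ^{(\NN)},
\]
and so $M=\ZZ^{(\NN)}$.

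The only point requiring care is the support-size hypothesis of \Cref{M=Z^n}, which fails in general in the local setting. In the global setting it is handled by choosing $n$ strictly larger than both support sizes, which is always possible since elements of $\ZZ^{(\NN)}$ have finite support. No further obstacle is expected.
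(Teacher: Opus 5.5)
Your proof is correct and essentially the paper's argument: truncate to $M\cap\ZZ^n$ with $n$ large enough that $\ub,\vb\in\ZZ^{n-1}$, then apply \Cref{dZ^n} with $d=1$. The only difference is cosmetic: the paper obtains $\pm\eb_n\in M$ once and then uses the $\Sym$-invariance of $M$, whereas you repeat the argument for every large $n$ and take the union.
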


\begin{proof}
    Only the implication (ii)$\Rightarrow$(i) requires justification. Since the supports of $\ub$ and $\vb$ are finite, we may assume that $\ub,\vb\in \ZZ^{n-1}$ for some sufficiently large $n$. In particular, $|\supp(\ub)|,|\supp(\vb)| <n$ and $\ub,\vb\in M_n\defas M\cap \ZZ^n$. Note that the monoid $M_n$ is $\Sym(n)$-invariant. Applying \Cref{dZ^n} to $M_n$ yields $\pm \eb_{n} \in M_n\subseteq M$. Because $M$ is $\Sym$-invariant, it follows that $\pm \eb_{i} \in M$ for all $i\ge 1$. Hence, $M=\ZZ^{(\NN)}$, as desired.
\end{proof}

\subsection{Groups of units of symmetric monoids}\label{subsec:unit group}

Building on the results established in the previous subsection, we now give explicit descriptions of the groups of units of symmetric monoids. Let us first introduce some notation.
For a subset $A \subseteq \ZZ^{(\NN)}$, define
\begin{align*}
    \Zk(A) &= \{\ub \in A \mid s(\ub)=0\},\\
    \Pk(A) &= \{\ub \in A \mid s(\ub)\ge 0\},\\
    \Nk(A) &= \{\ub \in A \mid s(\ub)\le 0\}.
\end{align*}
When $A=\ZZ^{(\NN)}$ or $A=\ZZ^n$ for $n\ge 2$, it is shown in \cite[Corollaries~5.9 and 5.10]{L25} that
\begin{align*}
    \Zk(A) &= \ZZ \,\Omega(\eb_1-\eb_2),\\
    \Pk(A) &= \ZZ^n_{\ge 0} + \ZZ \,\Omega(\eb_1-\eb_2),\\
    \Nk(A) &= \ZZ^n_{\le 0} + \ZZ \,\Omega(\eb_1-\eb_2),
\end{align*}
where $\Omega=\Sym$ if $A=\ZZ^{(\NN)}$ and $\Omega=\Sym(n)$ if $A=\ZZ^n$ with $n\ge 2$.

Let $M \subseteq \ZZ^{(\NN)}$ be a monoid. Then $\Zk(M)$ is a submonoid of $M$. 
When $M$ is symmetric, its group of units admits an explicit description in terms of this submonoid and $M$ itself.

\begin{proposition}\label{unit group:local}
    Let $n\ge 2$, and let $M_n \subseteq \ZZ^{n}$ be a $\Sym(n)$-invariant monoid. Then:
    \begin{enumerate}
        \item 
        It holds that $\Zk(M_n) \subseteq \unit(M_n) \subseteq M_n.$
        Moreover,
        \[
        \unit(M_n)=
        \begin{cases}
            \Zk(M_n), & \text{if } M_n \subseteq \Pk(\ZZ^n) \text{ or } M_n \subseteq \Nk(\ZZ^n),\\
            M_n,      & \text{otherwise.}
        \end{cases}
        \]
        \item 
        One has $\Zk(M_n)=\unit(M_n)=M_n$ if and only if $M_n \subseteq \Zk(\ZZ^{n}).$
    \end{enumerate}  
\end{proposition}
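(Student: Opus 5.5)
The plan is to derive everything from \Cref{claim:s(w)epsilon} and from the case analysis already carried out in the proof of \Cref{nonpositivity:local}.

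First, the inclusion $\Zk(M_n)\subseteq\unit(M_n)$. Let $\wb\in M_n$ with $s(\wb)=0$. Then $s(-\wb)\epsilon_n=\nub\in M_n$, so \Cref{claim:s(w)epsilon}(ii) gives $-\wb\in M_n$, i.e.\ $\wb\in\unit(M_n)$. The inclusion $\unit(M_n)\subseteq M_n$ holds by definition.

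Next, the case split in (i). Suppose $M_n\subseteq\Pk(\ZZ^n)$ and $\ub\in\unit(M_n)$. Then $s(\ub)\ge0$ and $s(-\ub)\ge0$, so $s(\ub)=0$ and $\ub\in\Zk(M_n)$. The case $M_n\subseteq\Nk(\ZZ^n)$ is symmetric. In the remaining case there exist $\ub,\vb\in M_n$ with $s(\ub)>0>s(\vb)$, and we must show that every $\xb\in M_n$ satisfies $-\xb\in M_n$.

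This last step is the main obstacle. By \Cref{claim:s(w)epsilon}(ii), it suffices to show $-s(\xb)\epsilon_n\in M_n$. By \Cref{claim:s(w)epsilon}(i), both $s(\ub)\epsilon_n$ and $s(\vb)\epsilon_n$ lie in $M_n$. Put $a=s(\ub)>0$ and $b=-s(\vb)>0$; then $\pm ab\,\epsilon_n\in M_n$, since $ab\,\epsilon_n=b\cdot(a\epsilon_n)$ and $-ab\,\epsilon_n=a\cdot(-b\epsilon_n)$.

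Set $k=s(\xb)$. If $k=0$, we are done by the first step. If $k>0$, the expansion
\[
-\xb=(ab-1)\xb+\bigl(-\,ab\,\xb\bigr)
\]
shows it suffices to have $-ab\,\xb\in M_n$. For this I apply \Cref{claim:s(w)epsilon}(ii) to $ab\,\xb\in M_n$, which reduces the question to whether $-ab\,k\,\epsilon_n\in M_n$. This holds because $-abk\,\epsilon_n=k\cdot(-ab\,\epsilon_n)$ with $k>0$. The case $k<0$ is handled the same way using $ab\,\epsilon_n$. Hence $\unit(M_n)=M_n$. Note that no gcd condition is needed, since we only scale by positive integers.

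Finally, for (ii): if $M_n\subseteq\Zk(\ZZ^n)$, then $\Zk(M_n)=M_n$, and (i) gives $\unit(M_n)=M_n$. Conversely, suppose $\Zk(M_n)=M_n$. Then every element of $M_n$ has coordinate sum zero, so $M_n\subseteq\Zk(\ZZ^n)$.
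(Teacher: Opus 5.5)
Your proof is correct. It shares the paper's skeleton: $\Zk(M_n)\subseteq\unit(M_n)$ via \Cref{claim:s(w)epsilon}(ii), the sign argument when $M_n\subseteq\Pk(\ZZ^n)$ or $M_n\subseteq\Nk(\ZZ^n)$, and part (ii) read off from (i). Where you differ is the key step of the ``otherwise'' case.

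The paper takes $\ub$ with $s(\ub)>0$ and some $\vb$ with $s(\vb)<0$. It then borrows from the proof of \Cref{dZ^n} that $\pm d\,\epsilon_n\in M_n$ for $d=\gcd(s(\ub),s(\vb))$, which needs B\'ezout coefficients of prescribed signs. Then $s(-\ub)\epsilon_n$ is a positive multiple of $-d\,\epsilon_n$, and \Cref{claim:s(w)epsilon}(ii) applies directly to $\ub$.

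You avoid divisibility altogether. You produce only $\pm ab\,\epsilon_n$ by positive scaling. You then apply \Cref{claim:s(w)epsilon}(ii) to $ab\,\xb$, whose coordinate sum is now a multiple of $ab$, rather than to $\xb$. Finally you recover $-\xb=(ab-1)\xb+(-ab\,\xb)$. This is more elementary and self-contained. In fact the factor $b$ alone suffices when $k>0$ (since $-bk\,\epsilon_n=k\,s(\vb)\epsilon_n$), and $a$ alone when $k<0$.

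The paper's route is economical within the paper, because the gcd computation of \Cref{dZ^n} is needed anyway for \Cref{M=Z^n}, where the actual value of $d$ matters.
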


\begin{proof}
    (i) Let $\ub\in \Zk(M_n)$. Then $s(-\ub)\epsilon_n=-s(\ub)\epsilon_n=\nub$. By \Cref{claim:s(w)epsilon}(ii), it follows that $-\ub\in M_n$, and hence $\ub\in \unit(M_n)$. This shows that $\Zk(M_n) \subseteq \unit(M_n)$.

    If $M_n \subseteq \Pk(\ZZ^n)$, then $s(\ub)\ge0$ for all $\ub\in M$. Since $s(-\ub)=-s(\ub)$, it follows that $s(\ub)=0$ for every $\ub\in \unit(M_n)$, and hence $\unit(M_n) \subseteq \Zk(M_n)$. Therefore, $\unit(M_n)=\Zk(M_n)$. An analogous argument applies when $M_n \subseteq \Nk(\ZZ^n)$.
    
    Now assume that $M_n \nsubseteq \Pk(\ZZ^n)$ and $M_n \nsubseteq \Nk(\ZZ^n)$.
    Since $\Zk(M_n) \subseteq \unit(M_n)$, it suffices to show that
    $M_n \setminus \Zk(M_n) \subseteq \unit(M_n)$.
    Let $\ub \in M_n \setminus \Zk(M_n)$. We may assume
    that $s(\ub)>0$, as the case $s(\ub)<0$ is analogous.
    Because $M_n \nsubseteq \Pk(\ZZ^n)$, there exists $\vb \in M_n$ with $s(\vb)<0$.
    Set $d=\gcd(s(\ub),s(\vb))$.
    By the proof of \Cref{dZ^n}, we have $\pm d\,\epsilon_n \in M_n$.
    Since $s(-\ub)$ is a positive integer multiple of $-d$, it follows that
    $s(-\ub)\,\epsilon_n \in M_n$.
    By \Cref{claim:s(w)epsilon}(ii), this implies that $-\ub \in M_n$.
    Hence, $\ub \in \unit(M_n)$, as wanted.

    (ii) This follows immediately from part~(i) and the observation that 
    \[\Zk(M_n)=M_n \text{ if and only if } M_n \subseteq \Zk(\ZZ^{n}).
    \qedhere
    \]
\end{proof}

A monoid is a group precisely when it coincides with its subgroup of units. Therefore, from \Cref{unit group:local} we immediately obtain the following consequence.

\begin{corollary}\label{cor:sym-local-group}
    Let $n\ge 2$, and let $M_n \subseteq \ZZ^{n}$ be a $\Sym(n)$-invariant monoid.
    Then $M_n$ is a group if and only if one of the following conditions holds:
    \begin{enumerate}
        \item 
        $s(\wb)=0$ for all $\wb \in M_n$.
        \item 
        There exist $\ub,\vb \in M_n$ such that $s(\ub)>0>s(\vb)$.
    \end{enumerate}
\end{corollary}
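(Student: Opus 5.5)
The corollary will follow directly from \Cref{unit group:local}, using that $M_n$ is a group exactly when $\unit(M_n)=M_n$. I will prove both implications using the case split on the coordinate sums of elements of $M_n$ from part~(i) of that proposition.

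For sufficiency, first suppose condition (i) holds, that is, $s(\wb)=0$ for all $\wb\in M_n$. Then $M_n\subseteq \Zk(\ZZ^n)$, and \Cref{unit group:local}(ii) gives $\unit(M_n)=M_n$. Next suppose condition (ii) holds, so there are $\ub,\vb\in M_n$ with $s(\ub)>0>s(\vb)$. Then $M_n\nsubseteq \Pk(\ZZ^n)$ because $\vb\in M_n$, and $M_n\nsubseteq \Nk(\ZZ^n)$ because $\ub\in M_n$. We are therefore in the ``otherwise'' case of \Cref{unit group:local}(i), which again gives $\unit(M_n)=M_n$. In both cases $M_n$ is a group.

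For necessity, assume $M_n$ is a group and that condition (ii) fails. Then $M_n$ cannot contain elements of both strictly positive and strictly negative coordinate sum. Hence $M_n\subseteq\Pk(\ZZ^n)$ or $M_n\subseteq\Nk(\ZZ^n)$. By \Cref{unit group:local}(i), $M_n=\unit(M_n)=\Zk(M_n)$. Thus every element of $M_n$ has coordinate sum $0$, which is condition (i).

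Alternatively, one can bypass the proposition: if $\wb\in M_n$ and $-\wb\in M_n$ with $s(\wb)\neq0$, then $\wb$ and $-\wb$ witness condition (ii). So if every element has an inverse, either all sums vanish or (ii) holds.

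No step is a genuine obstacle. The only point needing care is checking that the negation of (ii) places $M_n$ in the first case of \Cref{unit group:local}(i). The hypothesis $n\ge2$ is used only because it is needed to invoke \Cref{unit group:local}.
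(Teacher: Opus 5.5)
Your proof is correct and follows the paper's route: the corollary is read off from \Cref{unit group:local} using that $M_n$ is a group exactly when $\unit(M_n)=M_n$. Condition (i) falls under part (ii) of the proposition, and condition (ii) under the ``otherwise'' case of part (i). Your side remark that necessity needs no proposition at all is also valid, since any $\wb\in M_n$ with $s(\wb)\neq 0$ and $-\wb\in M_n$ already witnesses condition (ii).
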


As global counterparts of \Cref{unit group:local,cor:sym-local-group}, we now describe the groups of units of $\Sym$-invariant monoids and characterize when such monoids are groups. The proofs are entirely analogous to those in the local case and are therefore omitted.

\begin{proposition}\label{unit group:global}
    Let $M\subseteq \ZZ^{(\NN)}$ be a $\Sym$-invariant monoid. Then:
    \begin{enumerate}
        \item 
        It holds that $\Zk(M) \subseteq \unit(M) \subseteq M$. Moreover,
        \[
        \unit (M)
        =\begin{cases}
        \Zk(M) & \text{if } M \subseteq \Pk(\ZZ^{(\NN)}) \text{ or } M \subseteq \Nk(\ZZ^{(\NN)}),\\ 
        M& \text{otherwise.}
        \end{cases}
        \]
        \item 
        One has $\Zk(M)=\unit(M)=M$ if and only if $M \subseteq \Zk(\ZZ^{(\NN)}).$
    \end{enumerate}
\end{proposition}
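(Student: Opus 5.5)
The plan is to reduce everything to the local case, \Cref{unit group:local}, by working inside the truncations $M_n\defas M\cap\ZZ^n$. These are $\Sym(n)$-invariant monoids with $M=\bigcup_n M_n$. Every element of $M$, and any finite collection of elements, lies in some $M_n$ with $n\ge 2$. Units are also compatible with truncation: $\unit(M)\cap\ZZ^n=\unit(M_n)$, since $-\ub\in\ZZ^n$ whenever $\ub\in\ZZ^n$. Likewise $\Zk(M)\cap\ZZ^n=\Zk(M_n)$.

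For part (i), first show $\Zk(M)\subseteq\unit(M)$. Given $\ub\in\Zk(M)$, choose $n\ge 2$ with $\ub\in M_n$. Then $\ub\in\Zk(M_n)$, so \Cref{unit group:local}(i) gives $\ub\in\unit(M_n)\subseteq\unit(M)$. Alternatively, apply \Cref{claim:s(w)epsilon}(ii) directly in $M_n$.

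Next suppose $M\subseteq\Pk(\ZZ^{(\NN)})$. For any $\ub\in\unit(M)$ we have $s(\ub)\ge0$ and $s(-\ub)\ge 0$, so $s(\ub)=0$. Hence $\unit(M)=\Zk(M)$, and the case $M\subseteq\Nk(\ZZ^{(\NN)})$ is symmetric.

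Otherwise there are $\ub_0,\vb_0\in M$ with $s(\ub_0)>0>s(\vb_0)$. Take an arbitrary $\ub\in M$ and choose $n\ge2$ large enough that $\ub,\ub_0,\vb_0\in M_n$. Then $M_n\nsubseteq\Pk(\ZZ^n)$ and $M_n\nsubseteq\Nk(\ZZ^n)$, so \Cref{unit group:local}(i) gives $\unit(M_n)=M_n$. Thus $-\ub\in M_n\subseteq M$, and therefore $\unit(M)=M$.

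Part (ii) follows from part (i) together with the equivalence $\Zk(M)=M\iff M\subseteq\Zk(\ZZ^{(\NN)})$, which holds by definition of $\Zk$. Indeed, if $M\subseteq\Zk(\ZZ^{(\NN)})$, then $\Zk(M)=M$, and the inclusions in (i) force all three sets to coincide. Conversely, equality of the three sets gives $M=\Zk(M)\subseteq\Zk(\ZZ^{(\NN)})$.

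The only point needing care is choosing a single $n$ that captures the witnesses $\ub_0,\vb_0$ together with $\ub$, and taking $n\ge2$ so that \Cref{unit group:local} applies. I do not expect a genuine obstacle.
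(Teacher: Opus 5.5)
Your proof is correct and follows essentially the route the paper intends: the paper omits this proof as ``entirely analogous'' to the local case, and passing to the truncations $M\cap\ZZ^n$ is how that analogy is carried out, just as the paper deduces \Cref{M=Z^N} from \Cref{dZ^n}. Here $n\ge 2$ is taken large enough to contain $\ub$ and the two witnesses of opposite coordinate sum, after which \Cref{unit group:local} applies. The only difference is cosmetic: you cite the local proposition as a black box, whereas the paper would rerun \Cref{claim:s(w)epsilon} and the Euclidean-algorithm step inside $M\cap\ZZ^n$.
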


\begin{corollary}
    \label{cor:sym-global-group}
    Let $M\subseteq \ZZ^{(\NN)}$ be a $\Sym$-invariant monoid. Then $M$ is a group if and only if one of the following conditions holds:
    \begin{enumerate}
		\item  
        $s(\wb)=0$ for all $\wb\in M$.
		\item 
        There exist $\ub,\vb \in M$ such that $s(\ub)>0>s(\vb)$. 
	\end{enumerate}
\end{corollary}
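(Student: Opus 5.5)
The plan is to reduce \Cref{cor:sym-global-group} to \Cref{unit group:global}, using the fact that $M$ is a group exactly when $\unit(M)=M$.

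For the direction ``if'', suppose first that (i) holds, i.e., $s(\wb)=0$ for all $\wb\in M$. Then $M\subseteq\Zk(\ZZ^{(\NN)})$, and \Cref{unit group:global}(ii) gives $\unit(M)=M$, so $M$ is a group. Suppose instead that (ii) holds. Then there exist $\ub,\vb\in M$ with $s(\ub)>0>s(\vb)$. The element $\vb$ shows $M\nsubseteq\Pk(\ZZ^{(\NN)})$, and the element $\ub$ shows $M\nsubseteq\Nk(\ZZ^{(\NN)})$. Hence we are in the ``otherwise'' case of \Cref{unit group:global}(i), and again $\unit(M)=M$.

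For the direction ``only if'', assume $M$ is a group and that (ii) fails. Then $M$ cannot contain elements of both strictly positive and strictly negative coordinate sum, so $M\subseteq\Pk(\ZZ^{(\NN)})$ or $M\subseteq\Nk(\ZZ^{(\NN)})$. By \Cref{unit group:global}(i) we get $M=\unit(M)=\Zk(M)$. This means every element of $M$ has coordinate sum $0$, which is (i).

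A more direct check is also available: if $M$ is a group, then $s(-\wb)=-s(\wb)$ for every $\wb\in M$, so either all sums vanish or both signs occur. The only real input is \Cref{unit group:global}, whose proof (the analogue of \Cref{unit group:local}) the paper omits. If needed, I would verify it by passing to $M\cap\ZZ^n$ for $n$ large enough to contain the relevant elements, and then applying \Cref{unit group:local} there. No step here is a genuine obstacle.
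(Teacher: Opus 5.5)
Your proposal is correct and follows the paper's route: the corollary is read off from \Cref{unit group:global} using that a monoid is a group exactly when it equals its group of units, with the case split on whether $M\subseteq\Pk(\ZZ^{(\NN)})$ or $M\subseteq\Nk(\ZZ^{(\NN)})$ handled just as you do. Your suggested check of the omitted global proposition is also sound: apply \Cref{unit group:local} to $M\cap\ZZ^n$ for some $n\ge 2$ large enough to contain the relevant elements.
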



\section{Ranks of symmetric monoids}\label{sec:rank}

In this section, we compute the rank of local symmetric monoids and, as a consequence, derive a formula for the rank of monoids appearing in a stabilizing $\Sym$-invariant chain. This computation is of independent interest and also sheds further light on the structure of finite-dimensional approximations of $\Sym$-invariant monoids.

Recall that the rank of a monoid $M\subseteq \ZZ^{(\NN)}$ can be computed as
\[
    \rank M = \dim_{\RR}(\RR M).
\]
We also recall that, for $n\ge 2$, one has
\[
    \Zk(\ZZ^n)= \{\ub \in \ZZ^n \mid s(\ub)=0\}=\ZZ\,\Sym(n)(\eb_1-\eb_2).
\]

\begin{proposition}\label{rank of monoids}
    Let $n\ge 2$ and let $M_n \subseteq \ZZ^n$ be a nonzero $\Sym(n)$-invariant monoid. Then
    \[
        \rank M_n=
        \begin{cases}
            1   & \text{if } M_n \subseteq \ZZ\,\epsilon_n,\\
            n-1 & \text{if } M_n \subseteq \Zk(\ZZ^n),\\
            n   & \text{otherwise.}
        \end{cases}
    \]
\end{proposition}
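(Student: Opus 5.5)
The plan is to work entirely with the real span $V\defas\RR M_n$, which is a $\Sym(n)$-invariant subspace of $\RR^n$ with $\rank M_n=\dim V$. The key input is the classical decomposition of the permutation representation, $\RR^n=\RR\epsilon_n\oplus W$ with $W=\{\xb\mid s(\xb)=0\}$. I will avoid quoting irreducibility abstractly and instead derive what is needed by hand from the monoid.

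\emph{Case $M_n\subseteq\ZZ\epsilon_n$.} Since $M_n\neq\{\nub\}$, it contains a nonzero multiple of $\epsilon_n$. Hence $V=\RR\epsilon_n$ and the rank is $1$.

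\emph{Otherwise, show that $W\subseteq V$.} Pick $\ub\in M_n\setminus\ZZ\epsilon_n$. Then $\ub$ has two coordinates $u_i\neq u_j$. Let $\tau$ be the transposition $(i\,j)$. Since $\tau(\ub)\in M_n$, we get
\[
\ub-\tau(\ub)=(u_i-u_j)(\eb_i-\eb_j)\in V,
\]
so $\eb_i-\eb_j\in V$. Applying $\Sym(n)$ then puts all $\eb_k-\eb_l$ in $V$. These vectors span $W$, which has dimension $n-1$ because $\Zk(\ZZ^n)=\ZZ\,\Sym(n)(\eb_1-\eb_2)$ and $W$ is the kernel of the nonzero functional $s$. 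Therefore $n-1\le\dim V\le n$.

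\emph{Deciding between $n-1$ and $n$.} If $M_n\subseteq\Zk(\ZZ^n)$, then $V\subseteq W$, so $V=W$ and the rank is $n-1$. If instead some $\wb\in M_n$ has $s(\wb)\ne0$, then $\wb\notin W$, so $V\supsetneq W$ and $\dim V=n$. Alternatively, \Cref{claim:s(w)epsilon}(i) gives $s(\wb)\epsilon_n\in M_n$ directly, and together with $W$ this spans $\RR^n$.

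\emph{Consistency of the case split.} The first two cases overlap only when $M_n\subseteq\ZZ\epsilon_n\cap\Zk(\ZZ^n)=\{\nub\}$, which is excluded by the assumption $M_n\neq\{\nub\}$, so the three cases are mutually exclusive and the formula is well defined. I do not expect a real obstacle; the only step needing care is producing a difference vector from an element not in $\ZZ\epsilon_n$, and the transposition argument handles it.
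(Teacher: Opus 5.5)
Your proof is correct and follows essentially the same approach as the paper: both pass to the $\Sym(n)$-invariant subspace $\RR M_n$ and identify it as $\RR\,\epsilon_n$, $\Zk(\RR^n)$, or $\RR^n$. The only difference is that the paper cites the classification of $\Sym(n)$-invariant subspaces from \cite[Lemma 5.7]{L25}, whereas your transposition identity $\ub-\tau(\ub)=(u_i-u_j)(\eb_i-\eb_j)$ proves the part of that classification you need directly, which makes your argument self-contained.
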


\begin{proof}
    Let $L$ be a nonzero $\Sym(n)$-invariant vector subspace of $\RR^n$. By \cite[Lemma 5.7]{L25} and its proof, there are exactly three possibilities for $L$:
    \[
    L=
    \begin{cases}
        \RR\,\epsilon_n & \text{if } L \subseteq \RR\,\epsilon_n,\\
        \Zk(\RR^n)      & \text{if } L \subseteq \Zk(\RR^n),\\
        \RR^n& \text{otherwise.}
    \end{cases}
    \]
    Since $M_n \subseteq \ZZ^n$ is a nonzero $\Sym(n)$-invariant monoid, the $\RR$-vector space $\RR M_n$ is a nonzero $\Sym(n)$-invariant subspace of $\RR^n$. Hence, $\RR M_n$ must be one of the three subspaces listed above. Now the conclusion follows from the facts that
    \[
        \dim_{\RR}(\RR\,\epsilon_n)=1
        \quad\text{and}\quad
        \dim_{\RR}\Zk(\RR^n)
        =\dim_{\RR}\{\ub \in \RR^n \mid s(\ub)=0\}
        = n-1.
        \qedhere
    \]
\end{proof}

\begin{corollary}\label{rank of chain}
    Let $\M=(M_{n})_{n\geq 1}$ be a stabilizing $\Sym$-invariant chain of monoids with stability index $\ind(\M)=r.$ If $M_r\ne\{\nub\}$, then for all $n>r$, we have
    $$\rank M_n=\begin{cases}
n-1& \text{if } M_r \subseteq  \Zk(\ZZ^r),\\
n& \text{otherwise. }
\end{cases}$$
\end{corollary}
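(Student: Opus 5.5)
The plan is to reduce the statement to \Cref{rank of monoids} applied to $M_n$ for $n>r$. The work is to decide which of the three cases there applies.

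\textbf{Setup.} Since $\M$ stabilizes with index $r$, we have
\[
M_n=\mndcl(\Sym(n)(M_r)) \quad\text{for all } n\ge r.
\]
Because $M_r\neq\{\nub\}$, each $M_n$ with $n\ge r$ is nonzero and $\Sym(n)$-invariant, and $n\ge 2$ when $n>r$. So \Cref{rank of monoids} applies. The key observation is that the coordinate sum is $\Sym$-invariant and additive. Hence $M_n\subseteq\Zk(\ZZ^n)$ if and only if every generator $\sigma(\ub)$, with $\sigma\in\Sym(n)$ and $\ub\in M_r$, has coordinate sum $0$. This holds if and only if $M_r\subseteq\Zk(\ZZ^r)$.

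\textbf{Excluding the rank-one case.} We must show $M_n\not\subseteq\ZZ\,\epsilon_n$ for $n>r$. Take any nonzero $\ub\in M_r\subseteq\ZZ^r\subseteq\ZZ^n$. Viewed in $\ZZ^n$, its $n$-th coordinate is $0$, while some coordinate is nonzero. Therefore $\ub$ is not a multiple of $\epsilon_n$, yet $\ub\in M_n$. This is where $n>r$ is used.

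\textbf{Conclusion.} By \Cref{rank of monoids}:
\begin{itemize}
\item if $M_r\subseteq\Zk(\ZZ^r)$, then $M_n\subseteq\Zk(\ZZ^n)$ and $\rank M_n=n-1$;
\item otherwise $M_n$ lies in neither $\ZZ\,\epsilon_n$ nor $\Zk(\ZZ^n)$, so $\rank M_n=n$.
\end{itemize}
Two points need care: invoking the stability equality at $n$ with $m=r$, and noting that the cases of \Cref{rank of monoids} are disjoint for $n\ge 2$ because $s(\epsilon_n)=n\ne0$. Beyond that there is no real obstacle.
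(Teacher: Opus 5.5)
Your proof is correct and follows essentially the same route as the paper: you rule out the case $M_n\subseteq\ZZ\,\epsilon_n$ using a nonzero element of $M_r$ whose $n$-th coordinate vanishes, you use stability to show that $M_n\subseteq\Zk(\ZZ^n)$ holds exactly when $M_r\subseteq\Zk(\ZZ^r)$, and you then invoke \Cref{rank of monoids}. You also spell out why that equivalence holds (the coordinate sum is additive and permutation-invariant on the generators $\Sym(n)(M_r)$), a step the paper simply calls evident.
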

\begin{proof}
    Let $\ub \in M_r$ be a nonzero element. Then $\ub \in M_n \setminus \ZZ\epsilon_n$ for $n>r$. Hence
    \[
    M_n \nsubseteq \ZZ\epsilon_n
    \quad\text{for all } n>r.
    \]
    Moreover, it is evident that $$M_r \subseteq  \Zk(\ZZ^r) \text{ if and only if  } M_n \subseteq  \Zk(\ZZ^n)\text{ for all } n>r.$$ 
    Therefore, the claim follows directly from \Cref{rank of monoids}.
\end{proof}


\section{Local--global principles}\label{sec:local-global}

In the study of \Cref{prob:local-global principle}, previous work has focused primarily on characterizing equivariant finite generation of global invariant objects in terms of their associated chains of local objects (see \cite{KLR22,L25,LR23,LR24}). This type of result is commonly referred to as the \emph{local--global principle}. 
In this section, focusing on symmetric monoids, we extend the scope of this principle beyond finite generation to encompass several additional fundamental properties of monoids, including positivity, non-positivity, normality, seminormality, and simplicity. This broader perspective yields a more comprehensive understanding of local--global behavior for monoids in the presence of symmetry.

\subsection{Local–global principle for finite generation}

We begin with a characterization of equivariant finite generation for general $\Sym$-invariant monoids. Extending the cases of nonnegative monoids considered in \cite[Corollary~5.13]{KLR22} and positive affine normal monoids treated in \cite[Corollary~4.5]{L25}, this result resolves \cite[Problem~4.7]{L25}. Analogous statements for lattices and cones can, in particular, be found in \cite[Theorem~3.6]{LR24} and \cite[Theorem~4.1]{L25}, respectively.

	\begin{theorem}\label{local-global:finite generation}
		Let $\M=(M_{n})_{n\geq 1}$ be a $\Sym$-invariant chain of monoids with limit $M=\bigcup_{n\geq 1}M_n.$ Then the following statements are equivalent:
		\begin{enumerate}
			\item $\M$ stabilizes, and is eventually finitely generated;
            \item There exists $p\in \NN$ and a finite subset $G_p \subseteq \ZZ^p$ such that $G_p$ is a $\Sym(n)$-equivariant generating set for $M_n$ for all $n\ge p$;
            \item There exists $q\in \NN$ such that for all $n\ge q$, the following conditions hold:
			\begin{enumerate}
				\item $M \cap \ZZ^{n}= M_{n}$,
				\item $M_{n}$ is finitely generated by elements of support size at most $q$;
			\end{enumerate}
			\item $M$ is $\Sym$-equivariantly finitely generated.
		\end{enumerate}
	\end{theorem}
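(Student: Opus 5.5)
The plan is to prove the cycle (i)$\Rightarrow$(ii)$\Rightarrow$(iv)$\Rightarrow$(iii)$\Rightarrow$(i). The steps other than (iv)$\Rightarrow$(iii) are bookkeeping.

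\emph{(i)$\Rightarrow$(ii).} Let $r=\ind(\M)$, and let $N\ge r$ be such that $M_n$ is finitely generated for all $n\ge N$. Put $p=N$ and let $G_p$ be a finite generating set of $M_p$. Stabilization gives $M_n=\mndcl(\Sym(n)(M_p))=\mndcl(\Sym(n)(G_p))$ for all $n\ge p$.

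\emph{(ii)$\Rightarrow$(iv).} Since $M=\bigcup_{n\ge p}M_n=\bigcup_{n\ge p}\mndcl(\Sym(n)(G_p))=\mndcl(\Sym(G_p))$, the finite set $G_p$ is a $\Sym$-equivariant generating set of $M$.

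\emph{(iii)$\Rightarrow$(i).} By (b), $M_n$ is finitely generated for $n\ge q$, so $\M$ is eventually finitely generated. By (b), each generator of $M_n$ has support size at most $q$, so it can be permuted by an element of $\Sym(n)$ into $\ZZ^q$. By (a), that permuted generator lies in $M\cap\ZZ^q=M_q$. Hence $M_n=\mndcl(\Sym(n)(M_q))$, and the same argument with $q$ replaced by any $m\ge q$ shows $M_n=\mndcl(\Sym(n)(M_m))$ for all $n\ge m\ge q$. Thus $\M$ stabilizes.

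\emph{(iv)$\Rightarrow$(iii).} This is the substantive direction. Let $G\subseteq\ZZ^p$ be a finite $\Sym$-equivariant generating set of $M$, and enlarge $p$ so that $G\subseteq M_p$. Then
\[
\mndcl(\Sym(n)(G))\subseteq M_n\subseteq M\cap\ZZ^n \quad\text{for all } n\ge p .
\]
Every element of $\mndcl(\Sym(n)(G))$ is a sum of elements of support size at most $p$. So both (a) and (b) follow, with $q$ a suitable multiple of $p$, once we prove
\[
M\cap\ZZ^n\subseteq\mndcl(\Sym(n)(G))\quad\text{for all } n\gg0. \tag{$\ast$}
\]
This is the main obstacle. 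Write an element $\xb\in M\cap\ZZ^n$ as $\xb=\sum_i\sigma_i(\gb_i)$ with $\gb_i\in G$. For nonnegative monoids no term can have support outside $[n]$, and $(\ast)$ is immediate. In general, coordinates outside $[n]$ may cancel among the terms.

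I would split into cases using the unit group dichotomy of \Cref{unit group:global}.

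\textbf{Group case.} If $M\not\subseteq\Pk(\ZZ^{(\NN)})$ and $M\not\subseteq\Nk(\ZZ^{(\NN)})$, or if $M\subseteq\Zk(\ZZ^{(\NN)})$, then $M$ is a group by \Cref{cor:sym-global-group}. Its equivariant generating set may then be taken to be $G\cup(-G)$, and $(\ast)$ becomes the local--global principle for $\Sym$-invariant lattices \cite[Theorem~3.6]{LR24}.

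\textbf{Remaining case.} By symmetry assume $M\subseteq\Pk(\ZZ^{(\NN)})$, so that $\unit(M)=\Zk(M)$. Then:
\begin{itemize}
\item every generator $\gb_i$ has $s(\gb_i)\ge0$;
\item the generators with $s=0$ generate the group $\Zk(M)$, which is again handled by the lattice result;
\item the number of terms with $s(\gb_i)>0$ is at most $s(\xb)$.
\end{itemize}
The cancelling configuration outside $[n]$ is then controlled by two pieces: boundedly many positive terms, and a correction term from $\Zk(M)$.

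I would then reduce one coordinate at a time. Fix $N>n$ and show that an element of $\mndcl(\Sym(N)(G))$ supported in $[N-1]$ lies in $\mndcl(\Sym(N-1)(G))$, provided $N-1$ is large compared with $p$. The idea is to use the extra room in $[N-1]$ to relocate index $N$ by a transposition. The $\Zk(M)$-summand absorbs the discrepancy, since the discrepancy has coordinate sum $0$ and lies in $\gp(M)\cap\Zk$. To make this absorption legitimate I would use the equivalence (i)$\Leftrightarrow$(ii) of \Cref{nonpositivity:global}. Making this absorption rigorous, and uniform in $n$, is where I expect the real difficulty to lie.
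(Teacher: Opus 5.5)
Your implications (i)$\Rightarrow$(ii), (ii)$\Rightarrow$(iv) and (iii)$\Rightarrow$(i) are correct. So is the reduction of (iv)$\Rightarrow$(iii) to $(\ast)$, and so is the plan to pass from $\mndcl(\Sym(N)(G))\cap\ZZ^{N-1}$ to $\mndcl(\Sym(N-1)(G))$ one coordinate at a time. But $(\ast)$ is the entire content of the theorem, and you do not prove it. The mechanism you sketch also cannot work as stated.

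\emph{No uniform control.} The sign dichotomy does not bound anything uniformly. When $M\subseteq\Pk(\ZZ^{(\NN)})$, the number of summands with positive coordinate sum is bounded only by $s(\xb)$, which is unbounded on $M\cap\ZZ^n$. When $M$ is positive (e.g.\ $G=\{(1,2)\}$ or $G=\{(1,-2,3),(1,4,-2)\}$), one has $\Zk(M)=\{\nub\}$, so your case split leaves the original problem untouched.

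\emph{Absorption is unjustified.} Moving a single summand $\vb$ by the transposition $(j\,N)$ changes it by $(v_N-v_j)(\eb_j-\eb_N)$. This lies in $\gp(M)\cap\Zk(\ZZ^{(\NN)})$, but that group is generally much larger than $\Zk(M)$: for $G=\{(1,2)\}$ it contains $\eb_1-\eb_2$, while $\Zk(M)=\{\nub\}$. Moreover, \Cref{nonpositivity:global} only produces \emph{some} nonzero element of $\Zk(M)$, not the specific discrepancy you need. Your group case via the lattice theorem is a legitimate reduction, but it does not help with the remaining case.

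The missing ingredient is a uniform bound on how many summands are needed to cancel the $N$-th coordinate.
\begin{itemize}
\item Let $g_1,\dots,g_K$ be the coordinate values occurring in $G$, together with $0$, and set $I=\{\ab\in\ZZ_{\ge0}^K\mid a_1g_1+\cdots+a_Kg_K=0\}$. This is the set of lattice points of a rational cone, so by Gordan's lemma it is a positive affine monoid with a finite Hilbert basis. Let $\alpha$ be the largest coordinate sum of a Hilbert basis element.
\item Suppose $\xb=\sum_i\vb_i$ with $\vb_i\in\Sym(N)(G)$ and $x_N=0$. The vector recording how many $\vb_i$ have $N$-th coordinate $g_j$ lies in $I$. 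Writing it as a sum of Hilbert basis elements partitions the $\vb_i$ into groups of at most $\alpha$ members, each group having $N$-th coordinates summing to $0$.
\item Each member has support size at most $p$. So if $N-1\ge p\alpha$, counting supports gives an index $j\in[N]$ at which all members of a given group vanish.
\item Apply $(j\,N)$ to \emph{every} member of that group, not to individual summands. This moves all members into $\Sym(N-1)(G)$ and leaves the group sum unchanged, because the $j$-th and $N$-th coordinates of that sum are both $0$.
\end{itemize}
This gives $\mndcl(\Sym(N)(G))\cap\ZZ^{N-1}=\mndcl(\Sym(N-1)(G))$ for all $N>p\alpha$, uniformly and with no case distinction. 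Iterating and taking the union over $N$ yields $(\ast)$ for all $n\ge p\alpha$.
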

    
We first illustrate \Cref{local-global:finite generation} with the following example. 

    \begin{example}
        Let $d\in\NN$. Consider the $\Sym$-invariant chain $\M=(M_n)_{n\ge 1}$ defined by
        \[
        M_1=\{\nub\}, \quad \text{and} \quad M_n=\{\ub \in \ZZ^n \mid s(\ub) \in d\ZZ_{\ge 0}\} \text{ for all $n\ge 2$}.
        \] 
        We show that
        \[
        M_n=\mndcl(\Sym(n)(\{d\eb_1\} \cup \{\eb_1-\eb_2\}))
        \quad \text{for all }  n\ge 2.
        \]
        Indeed, the inclusion $\supseteq$ is immediate. Conversely, for $\ub=(u_1,\dots,u_n)\in M_n$, we have
        \[
        \ub=s(\ub)\eb_1+\sum_{i=2}^n u_i(\eb_i-\eb_1) \in \mndcl(\Sym(n)(\{d\eb_1\} \cup \{\eb_1-\eb_2\})).
        \]
        Thus, $\M$ stabilizes with $\ind(\M)=2$, and
        $G_2= \{d\eb_1\}\cup\{\eb_1-\eb_2\} \subseteq \ZZ^2$
        is a $\Sym(n)$-equivariant generating set for $M_n$ for all $n\ge 2$. It follows that $G_2$ is a $\Sym$-equivariant generating set for the limit of the chain:
        \[
        M=\bigcup_{n\geq 1}M_n=\{\ub \in \ZZ^{(\NN)} \mid s(\ub) \in d\ZZ_{\ge 0}\}
        =\mndcl(\Sym(G_2)).
        \]
        Observe that $M \cap \ZZ^{n}= M_{n}$ for all $n\ge 2$.
    \end{example}

    While \Cref{local-global:finite generation} extends the nonnegative case considered in \cite[Corollary 5.13]{KLR22}, its proof requires a completely new idea. In fact, a key step in the proof is to establish the equality $M \cap \ZZ^{n}= M_{n}$ for $n\gg0$. In the nonnegative setting, this follows from the elementary observation that if
    $$\ub = \sum_{i=1}^{k}a_i\vb_i \in M\cap \ZZ^n$$
    with $a_i \in \NN$ and $\vb_i \in M$, then necessarily $\vb_i \in M\cap \ZZ^n$ for all $i\in [k]$. This argument crucially relies on nonnegativity and therefore does not extend to the general case. 
    
    For general $\Sym$-invariant chains of cones $\C = (C_n)_{n\ge 1}$, a different approach was used in \cite[Theorem~4.1]{L25} to prove that $C \cap \RR^{n} = C_n$ for $n \gg 0$. It exploits the fact that $C_n=C_n^{**}$, where $C_n^{**}$ denotes the double dual cone of $C_n$. However, this strategy is also not applicable in the present setting, since the double dual monoid $M_n^{**}$ is always normal (see \cite{LRV}), and hence one generally has $M_n\ne M_n^{**}$.

    To prove \Cref{local-global:finite generation}, our main idea is to introduce the monoid
    $$I=\left\lbrace (a_1,\dots, a_N) \in \ZZ_{\geq 0}^N \mid a_1g_1+\cdots+a_Ng_N=0\right\rbrace,$$ where $g_1,g_2,\dots,g_N$ are the coordinates of all vectors in a finite equivariant generating set $G_p\subseteq \ZZ^p$ of the limit monoid $M$. We will show that $I$ has a finite Hilbert basis $\Hc_I$ and that $M\cap\ZZ^n = M_n$ for all $n\ge p\alpha$, where 
    $$\alpha=\max \{1,\, s(\ab)\mid \ab \in \Hc_I\}.$$
    Before turning to the details, let us begin with the following key lemma.

\begin{lemma}\label{replace_coordinate}
	Let $p,\, s\in\NN$ and let $G \subseteq \ZZ^p$ be a nonempty set. For $n\ge p$, let $G_n\subseteq \ZZ^n$ be the $\Sym(n)$-orbit of $G$ and $M_n\subseteq \ZZ^n$ the monoid generated by $G_n$, i.e.,
    \[
    G_n=\Sym(n)(G)
    \quad\text{and}\quad
    M_n=\mndcl(G_n).
    \]
    Suppose that $\vb \in M_{n+1} \cap \ZZ^n$ for some $n \ge ps$. If $\vb$ can be written as the sum of at most $s$ elements of $G_{n+1}$, then $\vb \in M_n$.
\end{lemma}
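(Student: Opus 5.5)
We write $\vb=\sum_{j=1}^{k}\sigma_j(\gb_j)$ with $k\le s$, $\gb_j\in G\subseteq\ZZ^p$ and $\sigma_j\in\Sym(n+1)$, so each summand $\wb_j\defas\sigma_j(\gb_j)\in G_{n+1}$ has support contained in $\sigma_j([p])$. The plan is to find a single permutation $\tau\in\Sym(n+1)$ that fixes $\supp(\vb)$ pointwise and moves every index used by the summands away from $n+1$. Then $\vb=\tau(\vb)=\sum_j\tau\sigma_j(\gb_j)$. Each $\tau\sigma_j(\gb_j)$ lies in $G_{n+1}\cap\ZZ^n$, and an element of $\Sym(n+1)(G)$ supported in $[n]$ is an element of $\Sym(n)(G)$. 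Indeed, if $\pi(\gb)\in\ZZ^n$ with $\gb\in\ZZ^p$, we can compose $\pi$ with a transposition moving $n+1$ onto an index outside $\pi(\supp \gb)\cup\{n+1\}$, chosen so that the new permutation sends $[p]$ into $[n]$; after that it agrees on $[p]$ with some element of $\Sym(n)$, since $p\le n$. This gives $\vb\in M_n$.

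To construct $\tau$, let $U=\bigcup_j\sigma_j([p])\subseteq[n+1]$ be the set of all indices touched by the summands; it has at most $kp\le sp\le n$ elements. If $n+1\notin U$, take $\tau=\mathrm{id}$ and we are done. Otherwise $|U\setminus\{n+1\}|\le sp-1\le n-1$, so some index $i\in[n]$ lies outside $U$.

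Let $\tau$ be the transposition $(i\ \,n+1)$. Since $\vb\in\ZZ^n$, we have $v_{n+1}=0$. Also $v_i=0$, because $i$ lies outside every summand's support. Hence $\tau(\vb)=\vb$, and $\tau(U)\subseteq[n]$, so every $\tau\sigma_j(\gb_j)$ has support in $[n]$. It is exactly here that $n\ge ps$ is used, to guarantee a free slot $i$.

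The only subtle point is the claim that $G_{n+1}\cap\ZZ^n\subseteq G_n$. It is safer to bypass it by working directly with the index images, via the permutations $\tau\sigma_j$. Each $\tau\sigma_j$ maps $[p]$ into $[n]$. Any injection $[p]\to[n]$ extends to some $\rho_j\in\Sym(n)$, because $p\le n$. Then $\tau\sigma_j(\gb_j)=\rho_j(\gb_j)\in G_n$, since $\gb_j$ is supported in $[p]$. I expect this bookkeeping to be the main, though mild, obstacle; the counting bound $|U|\le ps\le n$ is the heart of the argument.
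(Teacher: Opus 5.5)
Your proof is correct and follows essentially the same route as the paper. Both use a pigeonhole count: the summands touch at most $kp\le ps\le n$ indices, so some coordinate of $[n+1]$ is untouched by every summand, and swapping it with $n+1$ fixes $\vb$ while moving every summand into $G_n$. Your one refinement is to track the full index sets $\sigma_j([p])$ rather than the supports. This is a small tidying that lets you extend each $\tau\sigma_j$ directly to an element of $\Sym(n)$, so you never need the identity $G_{n+1}\cap\ZZ^n=G_n$, which the paper uses without proof.
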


\begin{proof}
	Since $G_n=\Sym(n)(G)$, every element of $G_n$ has support size at most $p$. Moreover, $G_{n+1}\cap\ZZ^n=G_n$.
    Write 
    $$\vb=\wb_1+\cdots+\wb_k\text{, where } \wb_i=(w_{i,1},\dots,w_{i,n+1})\in G_{n+1} \text{ and } k\le s.$$ 
    
    We first claim that there exists $j\in [n+1]$ such that $w_{i,j}=0$ for all $i\in [k]$. Suppose, to the contrary, that for every $j \in [n+1]$ there exists some $i \in [k]$ with $w_{i,j} \neq 0$.
    Consider the vector 
    $$\wb=(\wb_1,\dots,\wb_k) \in \ZZ^{(n+1)k}.$$ 
    Since each $\wb_i \in G_{n+1}$ has support size at most $p$, we have
    $$|\supp(\wb)|=\sum_{i=1}^{k}|\supp(\wb_i)|\le pk.$$ 
    On the other hand, we may decompose $\wb$ as
    \[
        \wb = \ub_1 + \cdots + \ub_{n+1},
    \]
    where for each $j \in [n+1]$ the vector
    \[
        \ub_j = (0,\dots,0,w_{1,j},0,\dots,0,w_{k,j},0,\dots,0)
        \in \ZZ^{(n+1)k}
    \]
    is obtained from $\wb$ by retaining only the coordinates $w_{i,j}$ for $i \in [k]$ and replacing all others by $0$.
    By the assumption above, for each $j \in [n+1]$ there exists some $i \in [k]$ such that $w_{i,j} \neq 0$, and therefore $|\supp(\ub_j)| \ge 1$.
    Consequently,
    \[
        |\supp(\wb)|
        = \sum_{j=1}^{n+1} |\supp(\ub_j)|
        \ge n+1
        > ps
        \ge pk,
    \]
    which contradicts the inequality $|\supp(\wb)| \le pk$.
    This establishes the claim.

    Now write $\vb=(v_1,\dots,v_{n+1})$. Then $v_{n+1}=0$ because $\vb\in\ZZ^n$. By the claim, there exists $j \in [n+1]$ such that $w_{i,j} = 0$
    for all $i \in [k]$. Hence,
    \[
        v_j = \sum_{i=1}^k w_{i,j} = 0=v_{n+1}.
    \]
    It follows that
    \[
    \vb=\wb_1+\cdots+\wb_k
    = \wb_1' + \cdots + \wb_k',
    \]
    where for each $i \in [k]$, the vector $\wb_i'\in G_{n+1}$ is obtained from $\wb_i$ by swapping the $j$-th and $(n+1)$-st coordinates. Since the $(n+1)$-st coordinate of $\wb_i'$ is $0$, we have $\wb_i'\in\ZZ^n$, and hence $\wb_i'\in G_{n+1}\cap\ZZ^n=G_n$ for all $i \in [k]$.
    Therefore,
    \[
        \vb = \wb_1' + \cdots + \wb_k'\in \mndcl(G_n)= M_n,
    \]
    as claimed.
\end{proof}

We are now ready to prove \Cref{local-global:finite generation}.

	\begin{proof}[Proof of \Cref{local-global:finite generation}]
    While the implications (iii)$\Rightarrow$(ii)$\Rightarrow$(i)$\Rightarrow$(iv) 
    follow analogously to the proof of \cite[Theorem~4.1]{L25}, the implication
    (iv)$\Rightarrow$(iii) is the critical step.
    For the convenience of the reader, we present a complete proof of all implications.   
    \smallskip

		(i)$\Rightarrow$(iv): Assume that $\M$ stabilizes with $\ind(\M)=r$. Choose $p \ge r$ such that $M_p$ has a finite generating set $G_p$. Since $\ind(\M)=r$, we have $$M_n=\mndcl(\Sym(n)(G_p)) \subseteq \mndcl(\Sym(G_p)) \quad \text{for all } n\ge p.$$
		It follows that $$M=\bigcup_{n\geq 1}M_n=\bigcup_{n\geq p}M_n\subseteq\bigcup_{n\geq p}\mndcl(\Sym(G_p))=\mndcl(\Sym(G_p)).$$
		On the other hand, $\mndcl(\Sym(G_p)) \subseteq M$ since $G_p \subseteq M$ and $M$ is $\Sym$-invariant. Therefore, $M=\mndcl(\Sym(G_p))$, and $M$ is $\Sym$-equivariantly finitely generated.
        \smallskip
		
		(iv)$\Rightarrow$(iii): Let $G$ be a finite $\Sym$-equivariant generating set for $M$. Then $G\subseteq M_p$ for some $p\ge 1$. Let $g_1,\dots,g_N$ denote all coordinates of the vectors in $G$. Note that $N\le p|G|$. Consider the monoid
        $$I=\left\lbrace (a_1,\dots, a_N) \in \ZZ_{\geq 0}^N \mid a_1g_1+\cdots+a_Ng_N=0\right\rbrace.$$
        We have $I=C\cap \ZZ^N$, where
        $$C=\left\lbrace (a_1,\dots, a_N) \in \RR_{\geq 0}^N \mid a_1g_1+\cdots+a_Ng_N=0\right\rbrace.$$
        Since $C$ is the intersection of finitely many rational linear halfspaces, it is a rational cone (see \cite[Section~1.G]{BG09}). By Gordan’s lemma (\Cref{thm:classical-Gordan}), $I$ is an affine monoid. Moreover, $I$ is positive because $I\subseteq \ZZ_{\geq 0}^N.$
        Therefore, $I$ has a finite Hilbert basis $\Hc_I$.
        Define
        $$\alpha = \text{max}\left\lbrace 1,\, s(\ab) \mid \ab \in \Hc_I\right\rbrace
        \quad\text{and}\quad
        q=p\alpha.$$
        Then $q\ge p$. To prove (iii), it suffices to show that
        \[
        M_n= M\cap \ZZ^n = \mndcl(\Sym(n)(G))
        \quad\text{for all } n\ge q.
        \]
        Set $G_n=\Sym(n)(G)$ and $M_n'=\mndcl(G_n)$. We first claim that
        \[
        M_{n+1}'\cap \ZZ^n=M_n'
        \quad\text{for all } n\ge q.
        \]
        The inclusion $\supseteq$ is obvious. For the reverse inclusion, let $\ub=(u_1,\dots,u_{n+1}) \in M_{n+1}' \cap \ZZ^n$. Then $u_{n+1}=0$, and we may write
        \begin{equation}
            \label{eq:u}
            \ub=\sum_{i=1}^k\vb_i,
        \quad
        \vb_i\in G_{n+1}.
        \end{equation}
        Note that the $(n+1)$-st coordinate of each $\vb_i$ is one of the elements $g_1,\dots,g_N$. For $j\in [N]$, let $b_j$ be the number of summands $\vb_i$ whose $(n+1)$-st coordinate equals $g_j$. Equation~\eqref{eq:u} then yields
         \begin{equation}
             \label{eq:bg}
             b_1g_1+\cdots+b_Ng_N=u_{n+1}=0.
         \end{equation}
         This implies that $\bb\defas(b_1,\dots,b_N) \in I$. Hence, there exist $\cb_1,\dots,\cb_t\in \Hc_I$ such that
         $$\bb=\cb_1+\cdots+\cb_t.$$ 
         Suppose $\cb_l=(c_{l,1},\dots,c_{l,N})$ for all $l \in [t]$. Then \eqref{eq:bg} can be rewritten  as 
        \begin{equation}\label{eq2:u_n+1}
            (c_{1,1}g_1+\cdots+c_{1,N}g_N)+\cdots+(c_{t,1}g_1+\cdots+c_{t,N}g_N)=u_{n+1}=0.
        \end{equation}
        Using this decomposition, we partition the summands $\vb_i$ in \eqref{eq:u} into $t$ groups as follows.
        For each $l\in[t]$, the $l$-th group consists of $c_{l,1}$ elements with $(n+1)$-st coordinate $g_1$, $c_{l,2}$ elements with $(n+1)$-st coordinate $g_2$, and so on. Let $\wb_l$ denote the sum of the elements in the $l$-th group. Since $\cb_l=(c_{l,1},\dots,c_{l,N}) \in \Hc_I\subseteq I$, the $(n+1)$-st coordinate of $\wb_l$ is 
        \[
        c_{l,1}g_1+\cdots+c_{l,N}g_N=0.
        \]
        Thus, $\wb_l\in M_{n+1}'\cap\ZZ^n$. Moreover, $\wb_l$ is a sum of $c_{l,1}+\cdots+c_{l,N}=s(\cb_l)\le\alpha$ elements of $G_{n+1}$. Since $n\ge q=p\alpha$, \Cref{replace_coordinate} implies $\wb_l\in M_n'$ for all $l\in[t]$. Therefore,
        \[
        \ub=\wb_1+\cdots+\wb_t\in M_n'.
        \]
        This shows that $M_{n+1}'\cap \ZZ^n=M_n'$ for all $n\ge q$, as claimed. 

        Now let $m>n\ge q$.
        Iterating the above equality, we obtain
        \begin{align*}
        M_m'\cap\ZZ^n
        &= (M_m'\cap\ZZ^{m-1})\cap\ZZ^n
         = M_{m-1}'\cap\ZZ^n \\
        &=\cdots
         = M_{n+1}'\cap\ZZ^n
         = M_n'.
        \end{align*}
		Since 
        $$M=\mndcl (\Sym(G))=\bigcup_{m>n}\mndcl (\Sym(m)(G))=\bigcup_{m>n}M_m',$$
        it follows that
		\begin{align*}
			M \cap \ZZ^n&=\bigcup_{m>n}M_m' \cap \ZZ^n=\bigcup_{m> n}M_n'=M_n' \quad \text{for all } n\ge q.
		\end{align*}
		On the other hand, we always have
		$$ M_n' \subseteq M_n \subseteq M\cap \ZZ^n 
        \quad \text{for all } n\ge q,$$ 
        where the first inclusion follows from $G\subseteq M_n$ and the
        $\Sym(n)$-invariance of $M_n$.
        Hence equality holds throughout, and (iii) is proved. 
        \smallskip

		(iii)$\Rightarrow$(ii): We show that one may choose $p=q$. Indeed, for $n\ge q$, let $G_n$ be a generating set of $M_n$ whose elements have support size at most $q$. Then for each $\ub\in G_n$, there exists $\sigma \in \Sym(n)$ such that $\ub'=\sigma(\ub)\in \ZZ^q$. Since $\ub=\sigma^{-1}(\ub')$ and $\ub'\in M\cap \ZZ^q =M_q$, we obtain $G_n \subseteq \Sym(n)(M_q)$. Consequently,
        \[
        M_n=\mndcl(G_n)
        \subseteq \mndcl(\Sym(n)(M_q))
        \subseteq M_n,
        \]
        and equality holds throughout.
        Thus
        \[
        M_n=\mndcl(\Sym(n)(M_q))=\mndcl(\Sym(n)(G_q))
        \quad\text{for all } n\ge q,
        \]
        as desired. 
        \smallskip

        Finally, the implication (ii)$\Rightarrow$(i) is immediate. The proof is complete.
        \end{proof}

To illustrate the idea used in the proof of \Cref{local-global:finite generation}, we consider the following example.

    \begin{example}
        Let $M$ be the $\Sym$-equivariant monoid generated by 
        $$G=\{(1,-2,3), (1,4,-2)\} \subseteq \ZZ^3.$$
        The set of all coordinates appearing in vectors of $G$ is $\{1,-2,3,4\}$. Consider the monoid
        $$I=\left\lbrace (a_1,a_2,a_3,a_4) \in \ZZ_{\geq 0}^4 \mid a_1-2a_2+3a_3+4a_4=0\right\rbrace.$$ 
        Using 
        Macaulay2 \cite{GS}, we find the following Hilbert basis for $I$:
        $$\Hc_I=\{(0,2,0,1),\ (0,3,2,0),\ (1,2,1,0),\ (2,1,0,0)\}.$$ Thus,
        \[
        \alpha
        =\max\{1,\, s(\ab)\mid \ab\in\Hc_I\}
        =5.
        \]
        By \Cref{local-global:finite generation}, any $\Sym$-invariant chain of monoids $\M=(M_{n})_{n\geq 1}$ with limit monoid $M$ stabilizes and is eventually finitely generated.
        Moreover, if $p$ is the smallest index such that $G\subseteq M_p$, then the proof of \Cref{local-global:finite generation} shows that 
        $$M_n=M\cap \ZZ^n=\mndcl(\Sym(n)(G))=\mndcl(\Sym(n)((1,-2,3), (1,4,-2)))$$
        for all $n\ge p\alpha=5p.$
    \end{example}
    
\subsection{Local--global principles for positivity and non-positivity}
\label{subsec:local-global on positivity}

Building on the results of \Cref{sec:positive and nonpositive} concerning positive and non-positive symmetric monoids in both local and global settings, we now examine how the positivity or non-positivity of a stabilizing $\Sym$-invariant chain of monoids relates to that of its limit monoid.

\begin{proposition}\label{local-global:positive}
Let $\M=(M_n)_{n\ge 1}$ be a stabilizing $\Sym$-invariant chain of monoids with limit
$
M=\bigcup_{n\ge 1} M_n
$
and stability index $\ind(\M)=r$. Then the following statements are equivalent:
\begin{enumerate}
    \item $M_n$ is positive for all $n\ge 1$;
    \item $M_r$ is positive;
    \item $M$ is positive.
\end{enumerate}
\end{proposition}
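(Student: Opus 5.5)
We prove the cycle (iii)$\Rightarrow$(i)$\Rightarrow$(ii)$\Rightarrow$(iii). The first two implications are formal. Since $M_n\subseteq M$ for every $n$, any unit of $M_n$ is a unit of $M$, so positivity of $M$ forces $\unit(M_n)\subseteq\unit(M)=\{\nub\}$ for all $n$. The implication (i)$\Rightarrow$(ii) is trivial.

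For (ii)$\Rightarrow$(iii), the plan is to use the sign criteria of \Cref{positivity:local} and \Cref{positivity:global}. We argue by contraposition: suppose $M$ is non-positive. By \Cref{nonpositivity:global}, there is $\wb\in M\setminus\{\nub\}$ with $s(\wb)=0$.

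The task is to transport this zero-sum element back down to $M_r$. By stability, for every $n\ge r$ we have $M_n=\mndcl(\Sym(n)(M_r))$, and hence
\[
M=\bigcup_{n\ge r}M_n=\mndcl(\Sym(M_r)).
\]
So we may write $\wb=\sum_{i}\sigma_i(\ub_i)$ with $\ub_i\in M_r\setminus\{\nub\}$ and $\sigma_i\in\Sym$. Coordinate sums are $\Sym$-invariant, so
\[
0=s(\wb)=\sum_i s(\ub_i).
\]
Now assume $M_r$ is positive. By \Cref{positivity:local}, all nonzero elements of $M_r$ have coordinate sums of one strict sign, say all positive. Since $\wb\neq\nub$, the sum above is nonempty, and a nonempty sum of positive integers cannot vanish. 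This contradiction shows $M$ is positive. The case where all nonzero elements of $M_r$ have negative sum is symmetric.

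One should check the case $r=1$ separately, because \Cref{positivity:local} is stated for all $n\ge1$. For $n=1$ a positive submonoid of $\ZZ$ clearly has all nonzero elements of one sign, so the argument goes through unchanged. The only substantive step is the observation $M=\mndcl(\Sym(M_r))$ combined with the sign dichotomy; there is no real obstacle. Alternatively, the sign dichotomy for $M_r$ propagates directly to every $M_n$ with $n\ge r$, and then to $M$ via \Cref{positivity:global}, which also yields (i) independently.
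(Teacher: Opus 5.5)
Your proof is correct and is essentially the paper's argument: both rest on $M=\mndcl(\Sym(M_r))$ together with the sign dichotomy of \Cref{positivity:local}. The only difference is cosmetic: you argue by contraposition via the zero-sum element from \Cref{nonpositivity:global}, whereas the paper propagates the sign to $M$ and cites \Cref{positivity:global} directly, which is the alternative you mention at the end; your separate check of $r=1$ is unnecessary, since \Cref{positivity:local} is already stated for all $n\ge 1$.
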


\begin{proof}
The implication \textup{(i)}$\Rightarrow$\textup{(ii)} is immediate.
The implication \textup{(iii)}$\Rightarrow$\textup{(i)} follows since each $M_n$ is a submonoid of $M$.
It remains to prove \textup{(ii)}$\Rightarrow$\textup{(iii)}.  
By \Cref{positivity:local}, $s(\ub)$ has the same sign for all $\ub\in M_r\setminus\{\nub\}$. Since
$
M=\mndcl(\Sym(M_r)),
$
the same holds for all $\vb\in M\setminus\{\nub\}$. Therefore, by \Cref{positivity:global}, $M$ is positive.
\end{proof}

\begin{proposition}\label{local-global:nonpositive}
Let $\M=(M_n)_{n\ge 1}$ be a stabilizing $\Sym$-invariant chain of monoids with limit
$
M=\bigcup_{n\ge 1} M_n
$
and stability index $\ind(\M)=r$. Then the following statements are equivalent:
\begin{enumerate}
    \item $M_n$ is non-positive for all $n\ge r$;
    \item $M_r$ is non-positive;
    \item $M$ is non-positive.
\end{enumerate}
\end{proposition}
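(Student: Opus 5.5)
The plan is to prove the cycle of implications \textup{(i)}$\Rightarrow$\textup{(ii)}$\Rightarrow$\textup{(iii)}$\Rightarrow$\textup{(ii)}$\Rightarrow$\textup{(i)}. This is essentially the contrapositive of \Cref{local-global:positive}, but the index ranges differ: here (i) only asks for $n\ge r$. So I will argue directly rather than merely negate that result.

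The implication \textup{(i)}$\Rightarrow$\textup{(ii)} is trivial, taking $n=r$. For \textup{(ii)}$\Rightarrow$\textup{(i)}, suppose $\pm\ub\in M_r$ with $\ub\ne\nub$. Since the chain is ascending, $M_r\subseteq M_n$ for all $n\ge r$, so $\pm\ub\in M_n$ and each such $M_n$ is non-positive. The same inclusion $M_r\subseteq M$ gives \textup{(ii)}$\Rightarrow$\textup{(iii)}.

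The substantive direction is \textup{(iii)}$\Rightarrow$\textup{(ii)}. Assume $M$ is non-positive. By \Cref{nonpositivity:global}, there is $\wb\in M\setminus\{\nub\}$ with $s(\wb)=0$. Since $M=\mndcl(\Sym(M_r))$ by stabilization (as in the proof of \Cref{local-global:positive}), write $\wb=\sum_i \sigma_i(\ub_i)$ with $\ub_i\in M_r$ and $\sigma_i\in\Sym$. Two cases arise. If some $\ub_i\ne\nub$ has $s(\ub_i)=0$, then $M_r$ is non-positive by \Cref{nonpositivity:local}(i). Otherwise every nonzero $\ub_i$ has $s(\ub_i)\neq 0$. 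Because $\wb\ne\nub$, some $\ub_i$ is nonzero, and the identity $0=s(\wb)=\sum_i s(\ub_i)$ then forces some $s(\ub_i)>0$ and some $s(\ub_j)<0$. In that case \Cref{nonpositivity:local}(ii) makes $M_r$ non-positive.

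An equivalent, shorter route is to pass through \Cref{positivity:local,positivity:global}. If $M_r$ were positive, then $s$ would have constant strict sign on $M_r\setminus\{\nub\}$. Hence $s$ would have the same sign on $\mndcl(\Sym(M_r))\setminus\{\nub\}=M\setminus\{\nub\}$, which makes $M$ positive. This is exactly the argument of \Cref{local-global:positive}.

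The only point needing care is the equality $M=\mndcl(\Sym(M_r))$. It follows because $M_n=\mndcl(\Sym(n)(M_r))$ for all $n\ge r$ and $M=\bigcup_{n\ge r}M_n$. I do not expect a real obstacle. The main subtlety is that (i) cannot be extended to all $n\ge1$, since $M_n$ for $n<r$ may be positive, which is why the statement restricts to $n\ge r$.
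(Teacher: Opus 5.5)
Your proof is correct and matches the paper's. The paper also gets (i)$\Leftrightarrow$(ii) from the inclusions $M_r\subseteq M_n$ for $n\ge r$, and obtains (ii)$\Leftrightarrow$(iii) directly from \Cref{local-global:positive}, which is exactly your ``shorter route''; your direct decomposition argument for (iii)$\Rightarrow$(ii) is that same proof unpacked through \Cref{nonpositivity:global,nonpositivity:local}.
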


\begin{proof}
The implication \textup{(i)}$\Rightarrow$\textup{(ii)} is immediate. The implication \textup{(ii)}$\Rightarrow$\textup{(i)} results from the inclusions
$
M_r \subseteq M_n
$
for all $n\ge r$.
Finally, the equivalence \textup{(iii)}$\Leftrightarrow$\textup{(ii)} follows directly from \Cref{local-global:positive}.
\end{proof}

We conclude this subsection by extending \Cref{local-global:positive,local-global:nonpositive} to symmetric cones. Replacing $\ZZ^{(\NN)}$ by $\RR^{(\NN)}$ and the monoidal operation $\mndcl$ by the conical operation $\cone$, the notions introduced in \Cref{sec:preliminaries}, such as \emph{$\Sym$-invariant chains} and \emph{stabilizing chains}, extend naturally to cones; see \cite{KLR22,LR23,L25} for further details. Recall that a cone $C\subseteq \RR^{(\NN)}$ is called \emph{pointed} if its \emph{lineality space}
\[
\lin(C)=\{\ub\in C\mid -\ub\in C\}
\]
is trivial. Classifications of non-pointed symmetric cones can be found in \cite[Section 5]{L25}.

\begin{proposition}\label{local-global:pointed cone}
Let $\C=(C_n)_{n\ge 1}$ be a stabilizing $\Sym$-invariant chain of cones with limit
$
C=\bigcup_{n\ge 1} C_n
$
and stability index $\ind(\C)=r$. Then the following statements are equivalent:
\begin{enumerate}
    \item $C_n$ is pointed for all $n\ge 1$ (respectively, non-pointed for all $n\ge r$);
    \item $C_r$ is pointed (respectively, non-pointed);
    \item $C$ is pointed (respectively, non-pointed).
\end{enumerate}
\end{proposition}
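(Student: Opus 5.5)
We follow the monoid case: first establish cone analogues of \Cref{positivity:local} and \Cref{positivity:global}, then argue exactly as in \Cref{local-global:positive,local-global:nonpositive}.

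\emph{Step 1 (local criterion).} For a $\Sym(n)$-invariant cone $C_n\subseteq\RR^n$ we claim that $C_n$ is pointed if and only if either $s(\ub)>0$ for all $\ub\in C_n\setminus\{\nub\}$, or $s(\ub)<0$ for all $\ub\in C_n\setminus\{\nub\}$. The cyclic-permutation trick of \Cref{claim:s(w)epsilon} works verbatim over $\RR$. If $\wb\in C_n$, then $s(\wb)\epsilon_n\in C_n$. Moreover, if $s(-\wb)\epsilon_n\in C_n$, then $-\wb\in C_n$.

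Given $\wb\neq\nub$ with $s(\wb)=0$, we get $-\wb\in C_n$, so $C_n$ is non-pointed. Given $\ub,\vb\in C_n$ with $s(\ub)>0>s(\vb)$, the element $\wb=s(\ub)\vb-s(\vb)\ub$ lies in $C_n$ and has $s(\wb)=0$. If $\wb\ne\nub$ we are in the previous case. If $\wb=\nub$, then $\ub$ is a negative multiple of $\vb$, so $\pm\ub\in C_n$. Here positive real scalars are available, so the argument is even simpler than for monoids.

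Conversely, if $\pm\ub\in C_n$ with $\ub\ne\nub$, then either $s(\ub)=0$ or $s(\ub)$ and $s(-\ub)$ have opposite signs, so the sign condition fails.

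\emph{Step 2 (global criterion).} The same criterion holds for a $\Sym$-invariant cone $C\subseteq\RR^{(\NN)}$. Any $\ub\in C$ lies in $C\cap\RR^n$ for some $n$, and $C\cap\RR^n$ is a $\Sym(n)$-invariant cone. So if the sign condition fails in $C$, it fails in some $C\cap\RR^n$, which is then non-pointed by Step 1, and hence $C$ is non-pointed. Conversely, if $\pm\ub\in C$ with $\ub\ne\nub$, the sign condition fails directly.

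\emph{Step 3 (the equivalences).} For the pointed version, (i)$\Rightarrow$(ii) is trivial. (iii)$\Rightarrow$(i) holds because each $C_n\subseteq C$ and a subcone of a pointed cone is pointed. For (ii)$\Rightarrow$(iii), stabilization gives $C=\cone(\Sym(C_r))$: every $C_n$ with $n\ge r$ equals $\cone(\Sym(n)(C_r))$, and $C_n\subseteq C_r$ for $n<r$.

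By Step 1, all nonzero elements of $C_r$ have coordinate sums of one fixed sign, say positive. Since $s$ is $\Sym$-invariant and linear, every nonzero nonnegative combination of elements of $\Sym(C_r)$ has positive $s$. Thus $C$ satisfies the sign condition and is pointed by Step 2.

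For the non-pointed version, (i)$\Rightarrow$(ii) is trivial, and (ii)$\Rightarrow$(i) follows from $C_r\subseteq C_n$ for $n\ge r$. Finally, (ii)$\Leftrightarrow$(iii) is the contrapositive of the pointed equivalence (ii)$\Leftrightarrow$(iii).

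The only point needing care is the degenerate case $\wb=\nub$ in Step 1, and over $\RR$ this is immediate by scaling. The whole proof is otherwise a transcription of the monoid case.
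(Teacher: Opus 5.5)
Your proposal is correct and follows the route the paper intends: the paper omits this proof as "analogous to the monoid case", meaning one transfers the sign criteria of \Cref{positivity:local,positivity:global} to cones via the cyclic-permutation trick of \Cref{claim:s(w)epsilon} and then repeats the arguments of \Cref{local-global:positive,local-global:nonpositive}, exactly as you do. One small remark: in (ii)$\Rightarrow$(iii) you only use the trivial direction of your Step 2, since a cone whose nonzero elements all have coordinate sums of the same strict sign is obviously pointed.
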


The proof of \Cref{local-global:pointed cone} proceeds by arguments analogous to those in the monoidal case and is therefore omitted.

\subsection{Local--global principles for normality, seminormality, and simplicity}

In view of \Cref{local-global:finite generation,local-global:positive,local-global:nonpositive}, it is natural to expect analogous local--global behavior for other fundamental properties of monoids, such as normality, seminormality, and simplicity. In particular, these results suggest that for a stabilizing $\Sym$-invariant chain $\M=(M_n)_{n\ge 1}$ with stability index $\ind(\M)=r$, the properties of being normal, seminormal, or simplicial propagate from $M_r$ to all $M_n$ with $n>r$. However, the following example shows that this expectation is false. 

\begin{example}\label{ex:not normal}
Let $\M=(M_n)_{n\ge 1}$ be a stabilizing $\Sym$-invariant chain of monoids with $\ind(\M)=2$ and 
$M_2=\mndcl\bigl(\Sym(2)\bigl((1,2),(1,1)\bigr)\bigr).$
We show that $M_2$ is simplicial and normal (and hence seminormal), whereas for all $n\ge 3$ the monoid $M_n$ is neither simplicial nor seminormal (and therefore not normal). Consequently, the limit monoid of this chain is neither normal nor seminormal.

First, observe that
\[
\cone(M_2)=\cone\bigl((1,2),(2,1)\bigr),
\]
which is a simplicial cone. 
Hence, $M_2$ is simplicial. Moreover,
\[
(1,0)=(2,1)-(1,1)\in \gp(M_2),
\]
and since $\gp(M_2)$ is $\Sym(2)$-invariant, it follows that $\gp(M_2)=\ZZ^2$. 
To show that $M_2$ is normal, it suffices to prove that
\[
\cone(M_2)\cap \gp(M_2)=\cone(M_2)\cap \ZZ^2\subseteq M_2,
\]
as the reverse inclusion is trivial. Let $\ub\in \cone(M_2)\cap \ZZ^2$. Then
\[
\ub=\lambda_1(1,2)+\lambda_2(2,1)
   =(\lambda_1+2\lambda_2,\,2\lambda_1+\lambda_2)
\]
for some $\lambda_1,\lambda_2\in \RR_{\ge 0}$. 

It follows that
\[
\lambda_1-\lambda_2=(2\lambda_1+\lambda_2)-(\lambda_1+2\lambda_2)\in \ZZ.
\]
Assume $\lambda_1-\lambda_2=a\in \ZZ_{\ge 0}$ (otherwise, interchange $\lambda_1$ and $\lambda_2$). Then
\[
\ub=(\lambda_2+a)(1,2)+\lambda_2(2,1)
   =3\lambda_2(1,1)+a(1,2).
\]
Since
\[
3\lambda_2=(\lambda_1+2\lambda_2)-(\lambda_1-\lambda_2)\in \ZZ
\]
and $\lambda_2\ge 0$, we have $3\lambda_2\in \ZZ_{\ge 0}$, and hence $\ub\in M_2$. This shows that $M_2$ is normal.

We next show that $M_n$ is not simplicial for all $n\ge3$. Indeed, $\cone(M_3)$ is minimally generated by six vectors, namely all permutations of $(1,2,0)$. Since this cone has dimension three, it is not simplicial, and hence neither is $M_3$. By \Cref{local-global:simplicial} below, it follows that $M_n$ is not simplicial for all $n>3$.

Finally, we show that $M_n$ is not seminormal for all $n\ge3$. By \Cref{local-global:normal-seminormal} below, it suffices to prove that $M_3$ is not seminormal. Note that $\gp(M_n)=\ZZ^n$ for all $n\ge 2$, since $\gp(M_2)=\ZZ^2$. Thus, in particular, $\epsilon_3\in \ZZ^3=\gp(M_3)$. We have
\begin{align*}
    2\epsilon_3&=(2,2,2)=(2,1,0)+(0,1,2)\in M_3,\\
    3\epsilon_3&=(3,3,3)=(2,1,0)+(0,2,1)+(1,0,2)\in M_3.
\end{align*}
Suppose, toward a contradiction, that
\[
\epsilon_3\in M_3=\mndcl\bigl(\Sym(3)\bigl((1,2),(1,1)\bigr)\bigr).
\]
Then
$3=s(\epsilon_3)=3a+2b$
for some
$a,b\in \ZZ_{\ge0},$
since $s((1,2))=3$ and $s((1,1))=2$. This forces $a=1$ and $b=0$. Hence $\epsilon_3=\sigma((1,2,0))$ for some $\sigma\in \Sym(3)$, a contradiction. Therefore, $\epsilon_3\notin M_3$, and $M_3$ is not seminormal.
\end{example} 

From \Cref{ex:not normal}, we see that the normality (respectively, seminormality, simplicity) of $M_r$ alone does not imply that the chain is eventually normal (respectively, eventually seminormal, eventually simplicial), 
and hence does not imply the normality (respectively, seminormality, simplicity) of the limit monoid. It is therefore natural to ask to what extent the normality, seminormality or simplicity of a global monoid is governed by the corresponding properties of the associated local monoids. Our next goal is to address this question for $\Sym$-equivariantly finitely generated monoids.

Let $\M=(M_n)_{n\ge 1}$ be a $\Sym$-invariant chain of monoids with limit monoid $M$. Assume that $M$ is $\Sym$-equivariantly finitely generated. By \Cref{local-global:finite generation}, the chain $\M$ stabilizes and is eventually finitely generated. Moreover, there exists an integer $q\ge \ind(\M)$ such that
\[
M_n=M\cap \ZZ^n=\mndcl(\Sym(n)(M_q))
\quad\text{for all }
n\ge q.
\]
The smallest integer $q$ with this property is denoted by $q(\M)$. The proof of \Cref{local-global:finite generation} further shows that $q(\M)$ is precisely the least integer such that
\[
M_n\cap \ZZ^k=M_k
\quad\text{for all }
n\ge k\ge q(\M).
\]

In answer to the aforementioned question, we obtain the following local–global principle for normality and seminormality.

 \begin{proposition}\label{local-global:normal-seminormal}
     Let $\M=(M_n)_{n\ge 1}$ be a $\Sym$-invariant chain of monoids with limit monoid $M=\bigcup_{n\ge 1}M_n$. Assume that $M$ is $\Sym$-equivariantly finitely generated. Then the following are equivalent:
    \begin{enumerate}
        \item 
        $\M$ is eventually normal (respectively, eventually seminormal);
        \item 
        $M_n$ is normal (respectively, seminormal) for all $n\ge q(\M)$;
        \item 
        $M$ is normal (respectively, seminormal).
    \end{enumerate}
 \end{proposition}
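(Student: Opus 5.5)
We prove (ii)$\Rightarrow$(i) trivially, then (iii)$\Rightarrow$(ii) and (i)$\Rightarrow$(iii), using throughout the key fact recalled before the statement: for $q=q(\M)$ we have $M_n=M\cap\ZZ^n$ for all $n\ge q$, and $M_n\cap\ZZ^k=M_k$ for all $n\ge k\ge q$. The argument is the same for normality and seminormality. In both cases the defining condition has the form ``$\ub\in\gp$ and certain positive multiples of $\ub$ lie in the monoid imply $\ub$ lies in the monoid''. So we handle both properties in parallel, writing $T$ for the relevant set of multipliers ($T=\NN$ for normality, $T=\{2,3\}$ for seminormality).

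\emph{(iii)$\Rightarrow$(ii).} Let $n\ge q$, and let $\ub\in\gp(M_n)$ with $k\ub\in M_n$ for the relevant $k\in T$. Since $\gp(M_n)\subseteq\gp(M)$ and $M_n\subseteq M$, the hypothesis on $M$ gives $\ub\in M$. Also $\ub\in\ZZ^n$, so $\ub\in M\cap\ZZ^n=M_n$. Thus $M_n$ is normal (respectively, seminormal). This direction uses only the equality $M\cap\ZZ^n=M_n$.

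\emph{(i)$\Rightarrow$(iii).} Suppose $M_n$ is normal (respectively, seminormal) for all $n\ge N$. Let $\ub\in\gp(M)$ with $k\ub\in M$ for the relevant $k\in T$. Write $\ub=\xb-\yb$ with $\xb,\yb\in M$, and choose $n\ge\max\{N,q\}$ large enough that $\xb,\yb$, and the finitely many relevant multiples $k\ub$ all lie in $M_n$. This is possible since $M=\bigcup M_n$ and the chain is ascending. For normality a single $k$ suffices; for seminormality we need both $2\ub$ and $3\ub$. Then $\ub\in\gp(M_n)$ and $k\ub\in M_n$, so $\ub\in M_n\subseteq M$ by the local property.

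The only point requiring care is the \emph{group} condition. The local definitions ask for $\ub\in\gp(M_n)$, not merely $\ub\in\ZZ^n$. In (iii)$\Rightarrow$(ii) this is harmless, since $\gp(M_n)\subseteq\gp(M)$. In (i)$\Rightarrow$(iii) we must make sure $\ub\in\gp(M_n)$ for a single $n$ that also contains the multiples; choosing $n$ large enough to contain $\xb,\yb$ handles this. I expect no real obstacle beyond this bookkeeping. The substance lies in the earlier equality $M\cap\ZZ^n=M_n$ for $n\ge q(\M)$ from \Cref{local-global:finite generation}, which is what makes (iii)$\Rightarrow$(ii) work at exactly the index $q(\M)$.
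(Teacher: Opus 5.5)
Your proof is correct and rests on the same key fact as the paper's, namely $M\cap\ZZ^n=M_n$ for $n\ge q(\M)$, which you use for (iii)$\Rightarrow$(ii) exactly as the paper does. The only difference is organizational: you close the cycle (i)$\Rightarrow$(iii)$\Rightarrow$(ii)$\Rightarrow$(i), noting that the paper's (ii)$\Rightarrow$(iii) argument works with any threshold $N$ in place of $q(\M)$. This makes the paper's downward induction via $M_m\cap\ZZ^{m-1}=M_{m-1}$ for (i)$\Rightarrow$(ii) unnecessary, and your explicit choice of $n$ with $\xb,\yb\in M_n$ also justifies $\ub\in\gp(M_n)$ more carefully than the paper does.
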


 \begin{proof}
    Since the argument for seminormality is entirely analogous, we present the proof only in the case of normality.
    \smallskip
    
    (i)$\Rightarrow$(ii): Choose $m>q(\M)$ such that $M_n$ is normal for all $n\ge m$. By induction, it suffices to show that $M_{m-1}$ is normal. Let $\ub\in \gp(M_{m-1})$ be such that $k\ub\in M_{m-1}$ for some integer $k\ge 1$. Since $M_{m-1}\subseteq M_m$, we have $\ub\in \gp(M_m)$ and $k\ub\in M_m$. By the normality of $M_m$, it follows that $\ub\in M_m$. 
    As $\ub\in \gp(M_{m-1})\subseteq \ZZ^{m-1}$ and $m-1\ge q(\M)$, we obtain
    \[
     \ub\in M_m\cap \ZZ^{m-1}=M_{m-1}.
    \]
    Thus $M_{m-1}$ is normal.
    \smallskip

    (ii)$\Rightarrow$(i): This implication is obvious.
    \smallskip
    
    (ii)$\Rightarrow$(iii): Assume $\ub \in \gp(M)$ such that $k\ub \in M$ for some $k\ge 1$. We need to show that $\ub \in M$. Since $M=\bigcup_{n\ge 1}M_n$, there exists $n \ge q(\M)$ such that $\ub \in \gp(M_n)$ and $k\ub \in M_n$. By assumption, $M_n$ is normal. Hence, $\ub \in M_n\subseteq M$.
    \smallskip

    (iii)$\Rightarrow$(ii): Fix $n\ge q(\M)$, and let $\ub \in \gp(M_n) \subseteq\ZZ^n$ be such that $k\ub \in M_n$ for some integer $k\ge 1$. Then $\ub \in \gp(M)$ and $k\ub \in M$. Since $M$ is normal, it follows that $\ub \in M$. Therefore, $\ub \in M\cap \ZZ^n=M_n$, which proves that $M_n$ is normal.
\end{proof}

We conclude this section with a discussion of the simplicity of symmetric monoids. The following result indicates that this condition is rather restrictive.

\begin{proposition}\label{local-global:simplicial}
     Let $\M=(M_n)_{n\ge 1}$ be a $\Sym$-invariant chain of monoids with limit monoid $M=\bigcup_{n\ge 1}M_n$ and stability index $\ind(\M)=r$. Assume that $M$ is $\Sym$-equivariantly finitely generated. Then the following are equivalent:
    \begin{enumerate}
        \item 
        $\M$ is eventually simplicial;
        \item 
        $M_n$ is simplicial for all $n\ge r$;
        \item 
        $M$ is simplicial;
        \item 
        Either $\cone(M)=\RR^{(\NN)}_{\ge0}$, or $\cone(M)=\RR^{(\NN)}_{\le0}$;
        \item 
        Either $\cone(M_n)=\RR^{n}_{\ge0}$ for all $n\ge r$, or $\cone(M_n)=\RR^{n}_{\le0}$ for all $n\ge r$.
    \end{enumerate}
 \end{proposition}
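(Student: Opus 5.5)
The plan is to reduce everything to a rigidity statement about $\Sym(n)$-invariant simplicial cones whose generators have small support. The cycle I aim for is (ii)$\Rightarrow$(i)$\Rightarrow$(v)$\Rightarrow$(ii), together with (v)$\Rightarrow$(iv)$\Rightarrow$(iii)$\Rightarrow$(v). By \Cref{local-global:finite generation}, $M_n=\mndcl(\Sym(n)(M_r))$ for $n\ge r$ and $M_r$ is finitely generated, say by a finite set $G$. Hence $\cone(M_n)=\cone(\Sym(n)(G))$, a finitely generated cone all of whose generators have support size at most $r$. I assume $M\ne\{\nub\}$, since otherwise (iv) fails and (i)--(iii) hold trivially, so this degenerate case must be excluded or handled by convention.

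\emph{Step 1: (i)$\Rightarrow$(v).} Fix $n\ge r+2$ with $M_n$ simplicial, and let $C=\cone(M_n)$.
\begin{enumerate}
\item By \Cref{rank of chain}, $\rank M_n\in\{n-1,n\}$. If $M_r\subseteq\Zk(\ZZ^r)$, then \Cref{unit group:local} shows that $M_n$ is a group. Then $C$ is a nonzero linear subspace, which is never generated by linearly independent vectors. So $C$ is full dimensional and pointed, and it has exactly $n$ extreme rays, spanned by linearly independent vectors.
\item These rays are permuted by $\Sym(n)$, so each ray spanned by $\vb$ has an orbit of size at most $n$. A direct count shows that for $n\ge 3$ the only vectors with orbit of size at most $n$ are multiples of $\epsilon_n$ and vectors of the form $a\eb_i+b(\epsilon_n-\eb_i)$.
\item Orbits of size $1$ contribute at most one ray. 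Hence the $n$ rays form one orbit $\{a\eb_i+b(\epsilon_n-\eb_i)\}$.
\item Each extreme ray is spanned by some generator in $\Sym(n)(G)$, which has support size at most $r<n-1$. This forces $b=0$, so the rays are $\RR_{\ge0}\,a\eb_i$ and $C=\RR^n_{\ge0}$ or $\RR^n_{\le0}$ according to the sign of $a$.
\item Moreover, some multiple $a\eb_1$ lies in $\Sym(n)(G)$ up to scaling. Since it has support size $1$, a multiple of $\eb_1$ lies in $M_r$.
\end{enumerate}
The sign is the same for all large $n$, because $M_r\ne\{\nub\}$ sits in every such cone. For $r\le n$ we get $\cone(M_n)\subseteq\RR^n_{\ge0}$ by intersecting with $\RR^n$. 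Conversely, $\cone(M_n)\supseteq\cone(\Sym(n)(a\eb_1))$, which is the full orthant. This gives (v).

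\emph{Step 2: the easy links.}
\begin{enumerate}
\item (v)$\Rightarrow$(ii): the orthant is generated by the $n$ independent vectors $\pm\eb_i$.
\item (ii)$\Rightarrow$(i): this is trivial.
\item (v)$\Rightarrow$(iv): since $\cone(M)=\bigcup_n\cone(M_n)$, the orthants for $n\ge r$ assemble to $\RR^{(\NN)}_{\ge0}$ or $\RR^{(\NN)}_{\le0}$.
\item (iv)$\Rightarrow$(iii): $\cone(M)$ is then generated by the independent family $\{\pm\eb_i\}$.
\end{enumerate}

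\emph{Step 3: (iii)$\Rightarrow$(v), the main obstacle.} Suppose $\cone(M)$ is generated by a linearly independent family $B$. My plan is to show that each $\bb\in B$ spans an extreme ray of $\cone(M)$. Then, using $\cone(M)=\cone(\Sym(G))$, each such ray is spanned by some $\sigma(\gb)$ with $\gb\in G$, so every element of $B$ has support size at most $r$, and $B$ is $\Sym$-stable up to scaling. Next I apply the orbit-counting argument of Step 1 in the infinite setting. The $\Sym$-orbit of a finitely supported vector consists of pairwise independent directions. In an independent generating family, these directions must all occur, because each orbit element spans an extreme ray. Comparing with $\Sym(n)$-invariance in a truncation $\RR^n$, $n\ge r+2$, I would then show that $\cone(M)\cap\RR^n=\cone(M_n)$ for $n\ge q(\M)$ is generated by $B\cap\RR^n$. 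The point needing care is that a ray of $\cone(M_n)$ spanned by an element of $\RR^n$ is also extreme in $\cone(M)$, since a face of $\cone(M)$ meets $\RR^n$ in a face. It follows that $\cone(M_n)$ is simplicial, and Step 1 gives (v). If this truncation argument resists, my fallback is to argue directly: an independent orbit of a vector $\vb$ with $|\supp(\vb)|\ge2$ would contain $\vb$, $\tau(\vb)$ and the other transposition images, and these satisfy a nontrivial linear relation in a suitable finite coordinate window. That relation would force $|\supp(\vb)|=1$ and so give the orthant.
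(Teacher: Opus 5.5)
\textbf{Assessment.} Your Steps~1 and~2 are correct. The gap is in Step~3, the link (iii)$\Rightarrow$(v).

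\textbf{What works, and how it differs from the paper.} Step~1 reaches (i)$\Rightarrow$(v) by a different route. The paper builds a nested chain $G_p\supseteq G_{p+1}\supseteq\cdots$ via \Cref{cor:equi-independence} to get (iii), then uses \Cref{lem:symmetric-extreme} and \cite[Corollary~5.4]{KLR22}. You instead count the $n$ extreme rays of $\cone(M_n)$ at a single level $n\ge r+2$. \Cref{rank of chain} and \Cref{unit group:local} force $\dim\cone(M_n)=n$, and the support bound $r<n-1$ forces $b=0$.

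Two side remarks:
\begin{itemize}
\item \Cref{local-global:finite generation} does not give finite generation of $M_r$ itself, only of $M_n$ for $n\ge q(\M)$. You never need it, since \Cref{lem:extreme}(ii) applies to the generating set $\Sym(n)(M_r)$ of $\cone(M_n)$.
\item Your caveat about $M=\{\nub\}$ is correct. The zero cone is simplicial while (iv) and (v) fail. So the statement, and \Cref{lem:symmetric-extreme} (where $G$ may be empty), tacitly need $M\ne\{\nub\}$.
\end{itemize}

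\textbf{The gap.} Step~3 is your only route from (iii) back to the other conditions, and its stated justification runs the wrong way. That faces of $\cone(M)$ meet $\RR^n$ in faces of $\cone(M)\cap\RR^n$ only shows that extreme rays of $\cone(M)$ \emph{lying in} $\RR^n$ stay extreme in the truncation. It says nothing about extreme rays of the truncation being extreme in $\cone(M)$. That converse fails for simplicial cones in general: $\cone(\eb_1+\eb_3,\eb_2-\eb_3)\cap\RR^2=\cone(\eb_1+\eb_2)$.

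So the claim that $\cone(M)\cap\RR^n$ is generated by $B\cap\RR^n$ is unproved. Likewise, $\cone(M)\cap\RR^n=\cone(M_n)$ does not follow formally from $M\cap\ZZ^n=M_n$. Symmetry has to enter at this point.

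\textbf{A repair that keeps your structure.}
\begin{itemize}
\item Write $\xb\in\cone(M)\cap\RR^n$ uniquely as $\sum_{\bb\in S}\lambda_{\bb}\bb$ with all $\lambda_{\bb}>0$.
\item Suppose some $\bb_0\in S$ has $j\in\supp(\bb_0)$ with $j>n$. Choose $j'>n$ outside all supports of elements of $S$. The transposition $(j\ j')$ fixes $\xb$ and permutes the rays of $B$. It therefore produces a second representation of $\xb$ using a ray with $j'$ in its support, contradicting uniqueness.
\item Each $\bb\in B\cap\RR^n$ is a positive multiple of some $\tau(\ub)$ with $\ub\in M_r$ and $\tau\in\Sym(n)$, by \Cref{lem:extreme}(ii) applied to $\Sym(M_r)$.
\item Hence $\cone(M_n)=\cone(B\cap\RR^n)$ is simplicial for all $n\ge r$, and your Step~1 finishes the argument.
\end{itemize}

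\textbf{Your fallback.} It also works and is essentially the paper's \Cref{lem:symmetric-extreme}. If some $\bb\in B$ had $|\supp(\bb)|\ge2$, the rays $\cone(\sigma(\bb))$, $\sigma\in\Sym(n)$, are all rays of $B$. For large $n$ they would give more than $n$ independent vectors in $\RR^n$, which is impossible.

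However, the fallback only yields (iv), so you still need (iv)$\Rightarrow$(v):
\begin{itemize}
\item From $\cone(M)=\RR^{(\NN)}_{\ge0}$ you get $M\subseteq\ZZ^{(\NN)}_{\ge0}$ and some $c\eb_1\in M$.
\item Write $c\eb_1$ as a sum of $\Sym$-images of elements of $M_r$. Nonnegativity forces every summand to be supported on a single coordinate, so a multiple of $\eb_1$ lies in $M_r$.
\item Hence $\RR^n_{\ge0}\subseteq\cone(M_n)\subseteq\cone(M)\cap\RR^n=\RR^n_{\ge0}$ for all $n\ge r$.
\end{itemize}
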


 To prove this result, we need some preparations. Let $C\subseteq\RR^{(\NN)}$ be a cone. An element $\ub\in C\setminus\{\nub\}$ is called an \emph{extreme generator} of $C$ if, whenever $\ub=\vb+\wb$ for some $\vb,\wb\in C$, one necessarily has $\vb,\wb\in\cone(\ub)=\RR_{\ge0}\ub.$ In this case, we say that $\cone(\ub)$ is an \emph{extreme ray} of $C$. 
 
 The set of all extreme rays of $C$ is denoted by $\rho(C)$. For any subset $A\subseteq\RR^{(\NN)}$, we define
 \[
 \R(A)\defas\{\cone(\ub)\mid \ub\in A\},
 \]
 which is the set of rays spanned by the elements of $A$.
 
 The following lemma is well-known in the finite-dimensional setting. We include a proof here for completeness.

 \begin{lemma}
     \label{lem:extreme}
     Let $C\subseteq\RR^{(\NN)}$ be a simplicial cone. Assume that $F$ and $G$ are generating sets for $C$. Assume further that $G$ is an independent set. Then the following hold:
     \begin{enumerate}
         \item 
         $C$ is pointed.
         \item 
         $\rho(C)=\R(G)\subseteq \R(F)$.
         \item 
         Let $H\subseteq C$ be a subset such that $\cone(\ub)\ne \cone(\vb)$ whenever $\ub,\vb\in H$ and $\ub\ne\vb$. If $\rho(C)=\R(H)$, then $H$ is an independent generating set for $C$.
     \end{enumerate}
 \end{lemma}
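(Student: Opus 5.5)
The plan is to work throughout in coordinates with respect to the independent generating set $G$. Since $C=\cone(G)$ and $G$ is linearly independent in $\RR^{(\NN)}$, every $\ub\in C$ has a unique expression $\ub=\sum_{\gb\in G}a_{\gb}\gb$ with finitely many nonzero coefficients, and these coefficients are nonnegative. Uniqueness follows from independence, because any element of $\RR^{(\NN)}$ is a finite linear combination of elements of $G$ in at most one way. The argument is finite-dimensional in nature, since every element involves only finitely many generators.

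For (i), I suppose $\pm\ub\in C$ and write $\ub=\sum a_{\gb}\gb$ and $-\ub=\sum b_{\gb}\gb$ with $a_{\gb},b_{\gb}\ge0$. Adding gives $\nub=\sum(a_{\gb}+b_{\gb})\gb$. Independence forces $a_{\gb}+b_{\gb}=0$, hence $a_{\gb}=b_{\gb}=0$, and so $\ub=\nub$.

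For (ii), I first show that every $\gb\in G$ is an extreme generator. Note that $\gb\ne\nub$ by independence. If $\gb=\vb+\wb$ with $\vb,\wb\in C$, I expand $\vb$ and $\wb$ in $G$ with nonnegative coefficients. Uniqueness of coordinates then shows that the coefficients of all $\gb'\ne\gb$ vanish in both $\vb$ and $\wb$, so $\vb,\wb\in\RR_{\ge0}\gb$. Hence $\R(G)\subseteq\rho(C)$.

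Conversely, let $\ub$ be an extreme generator and write $\ub=\sum a_{\gb}\gb$ as a finite sum. Each summand $a_{\gb}\gb$ and the remaining partial sum lie in $C$. Extremality therefore gives $a_{\gb}\gb\in\RR_{\ge0}\ub$ for every $\gb$. Since $\ub\ne\nub$, some $a_{\gb}>0$, and then $\cone(\ub)=\cone(\gb)$. This shows $\rho(C)=\R(G)$.

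For the inclusion $\R(G)\subseteq\R(F)$, I write $\gb=\sum_i c_i\fb_i$ with $c_i>0$ and $\fb_i\in F$, discarding zero terms. Extremality of $\gb$ gives $c_i\fb_i\in\RR_{\ge0}\gb$. At least one $\fb_i$ is nonzero because $\gb\ne\nub$, and such an $\fb_i$ satisfies $\cone(\fb_i)=\cone(\gb)$.

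For (iii), I will define a map $H\to G$ as follows. For $\hb\in H$, the ray $\cone(\hb)$ lies in $\rho(C)=\R(G)$, so $\cone(\hb)=\cone(\gb)$ for some $\gb\in G$. This $\gb$ is unique, since distinct elements of $G$ span distinct rays: they are independent and no one is a positive multiple of another.

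This map is injective by the hypothesis that distinct elements of $H$ span distinct rays. It is surjective because $\R(H)=\rho(C)=\R(G)$. Hence $H=\{c_{\gb}\gb\mid\gb\in G\}$ with all $c_{\gb}>0$. Such a positive rescaling of $G$ is still linearly independent and still generates $C$.

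The only delicate points are bookkeeping: making sure $\nub$ is never counted as a generator, and checking that rays indexed by $G$ are pairwise distinct. I expect the converse direction of (ii), that an extreme generator must span a ray of some $\gb$, to be the main place where care is needed. It relies on splitting a finite nonnegative combination into two pieces lying in $C$.
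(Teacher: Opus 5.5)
Your proposal is correct and follows essentially the same route as the paper. Both arguments prove pointedness by adding the two expansions, show that the elements of $G$ are extreme via uniqueness of coefficients, use the splitting argument to place every extreme ray in $\R(G)$ and $\R(F)$, and in (iii) use the ray-by-ray bijection $H\to G$ to see that $H$ is a positive rescaling of $G$. The only cosmetic difference is organization: the paper proves $\rho(C)\subseteq\R(F)$ once for an arbitrary generating set and then specializes to $G$, whereas you run the splitting argument separately for $G$ and for $F$, taking slightly more care to discard zero generators.
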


 \begin{proof}
     (i) Suppose there exists an element $\xb$ such that $\pm \xb\in C$. Then we can write
     \[
     \xb=\sum_{i=1}^ka_i \ub_i
     \quad\text{and}\quad
     -\xb=\sum_{i=1}^kb_i \ub_i,
     \]
     where $\ub_i\in G$ and $a_i,b_i\in \RR_{\ge0}$ for all $i\in[k]$. It follows that
     \[
     \nub=\sum_{i=1}^k(a_i+b_i) \ub_i.
     \]
     Since $G$ is independent, this forces $a_i=b_i=0$ for all $i\in[k]$. Hence, $\xb=\nub$, and therefore $C$ is pointed.

     (ii) We first show that $\rho(C)\subseteq \R(F)$. Let $\cone(\xb)\in\rho(C)$. Since $F$ generates $C$, we can write
     \[
     \xb=\sum_{i=1}^ka_i \ub_i,
     \quad\text{with }
     \ub_i\in F,\ a_i\in \RR_{\ge0}.
     \]
     As $\xb\ne\nub$, we may assume that $a_1>0$. Since $\cone(\xb)$ is an extreme ray, it follows that $a_1\ub_1\in\cone(\xb)$. Hence, $\cone(\xb)=\cone(\ub_1)\in \R(F)$, and therefore $\rho(C)\subseteq \R(F)$.
     
     Since $G$ is also a generating set for $C$, the above argument shows that $\rho(C)\subseteq \R(G)$. It remains to prove the reverse inclusion, which amounts to showing that every element $\ub\in G$ is an extreme generator of $C$. Suppose that $\ub=\vb+\wb$ for some $\vb,\wb\in C$. Writing
     \[
     \vb=a\ub+\sum_{i=1}^ka_i \ub_i
     \quad\text{and}\quad
     \wb=b\ub+\sum_{i=1}^kb_i \ub_i
     \]
     with $\ub_i\in G\setminus\{\ub\}$ and $a,b,a_i,b_i\in \RR_{\ge0}$ for all $i\in[k]$, we obtain
     \[
     \ub=(a+b)\ub+\sum_{i=1}^k(a_i+b_i) \ub_i.
     \]
     By the independence of $G$, we must have $a+b=1$ and $a_i=b_i=0$ for all $i\in[k]$. Hence, $\vb,\wb\in\cone(\ub)$, showing that $\ub$ is an extreme generator of $C$. Thus, $\rho(C)=\R(G)$.

     (iii) By part~(ii), we have $\R(H)=\R(G)$. Together with the assumption on $H$, this implies that for each $\ub\in G$ there exists a unique $\vb\in H$ such that $\ub=a\vb$ for some $a>0$. Since $G$ is an independent generating set for $C$, it follows immediately that $H$ is also an independent generating set for $C$.
 \end{proof}

When a simplicial cone $C\subseteq\RR^{(\NN)}$ is $\Sym$-invariant, it is natural to ask whether $C$ admits a $\Sym$-equivariant independent generating set. The next result addresses this question in the finitely generated case. We formulate the statement for an arbitrary subgroup $\Omega\subseteq\Sym$, since the case $\Omega=\Sym(n)$ for $n\in\NN$ will also be needed later.

 \begin{lemma}
     \label{cor:equi-independence}
     Let $C\subseteq\RR^{(\NN)}$ be a simplicial cone that is $\Omega$-invariant for some subgroup $\Omega\subseteq\Sym$. Assume further that $C$ is $\Omega$-equivariantly generated by a finite set $F$. Then there exists a subset $G\subseteq F$ such that $\Omega(G)$ is an independent generating set for $C$. 
 \end{lemma}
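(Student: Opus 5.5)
The plan is to apply \Cref{lem:extreme} twice. Part~(ii) will show that every extreme ray of $C$ is spanned by an element of $\Omega(F)$. Part~(iii) will then produce the independent generating set, once we choose $G\subseteq F$ carefully. Since $C$ is simplicial, it has some independent generating set $G'$, and $\Omega(F)$ is also a generating set. So \Cref{lem:extreme}(ii) gives
\[
\rho(C)=\R(G')\subseteq \R(\Omega(F)),
\]
and by \Cref{lem:extreme}(i) the cone $C$ is pointed.

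Next we observe that $\rho(C)$ is $\Omega$-invariant. Indeed, $\sigma\in\Omega$ acts on $C$ by a linear automorphism, because $C$ is $\Omega$-invariant and $\sigma^{-1}\in\Omega$. Such an automorphism maps extreme generators to extreme generators. Hence $\rho(C)$ is a union of $\Omega$-orbits of rays. Each ray is of the form $\cone(\sigma(\fb))$ with $\fb\in F$, so each of these orbits is the orbit of a ray $\cone(\fb)$ with $\fb\in F$ and $\cone(\fb)\in\rho(C)$.

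We now choose $G$. Let $F'=\{\fb\in F\mid \cone(\fb)\in\rho(C)\}$; in particular every $\fb\in F'$ is nonzero. Then
\[
\rho(C)=\R(\Omega(F')).
\]
Two elements of $F'$ may span rays in the same $\Omega$-orbit, so we pick $G\subseteq F'$ containing exactly one representative for each $\Omega$-orbit of rays in $\rho(C)$. This is possible because $F$ is finite. Then still $\R(\Omega(G))=\rho(C)$.

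To apply \Cref{lem:extreme}(iii) to $H=\Omega(G)$, we must check that distinct elements of $H$ span distinct rays. First take $\ub=\sigma(\gb)$ and $\vb=\tau(\gb')$ with $\gb,\gb'\in G$ and $\cone(\ub)=\cone(\vb)$. Then the rays $\cone(\gb)$ and $\cone(\gb')$ lie in the same orbit, so $\gb=\gb'$ by the choice of $G$. It remains to see that $\cone(\sigma(\gb))=\cone(\tau(\gb))$ forces $\sigma(\gb)=\tau(\gb)$. Write $\sigma(\gb)=\lambda\tau(\gb)$ with $\lambda>0$. Since $\sigma$ and $\tau$ only permute coordinates, the two vectors have the same multiset of coordinates, and in particular the same $\ell^1$-norm. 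As $\gb\neq\nub$, this gives $\lambda=1$.

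Thus \Cref{lem:extreme}(iii) applies and shows that $\Omega(G)$ is an independent generating set for $C$. The main obstacle is the step just handled: ensuring that $H$ has no two distinct elements on the same ray. The orbit-representative choice together with the norm argument resolves it. Pointedness of $C$, from \Cref{lem:extreme}(i), also rules out degenerate situations such as opposite vectors spanning the same line.
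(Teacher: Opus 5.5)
Your proof is correct and follows the same route as the paper. Both select from $F$ extreme generators representing each $\Omega$-orbit of extreme rays exactly once (the paper does this by a greedy pass through $F$), show $\rho(C)=\R(\Omega(G))$, and conclude with \Cref{lem:extreme}(iii). Your explicit check that distinct elements of $\Omega(G)$ span distinct rays, in particular the $\ell^1$-norm argument showing that $\cone(\sigma(\ub))=\cone(\tau(\ub))$ forces $\sigma(\ub)=\tau(\ub)$, verifies a hypothesis of \Cref{lem:extreme}(iii) that the paper's proof leaves implicit.
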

    
 \begin{proof}
     Suppose $F=\{\ub_1,\dots,\ub_m\}$. We construct $G$ inductively as follows. Initialize $G=\emptyset$. For $i=1,\dots,m$, append $\ub_i$ to $G$ if and only if $\ub_i$ is an extreme generator of $C$ and
     \[
     \cone(\ub_i)\neq\cone(\sigma(\ub_j))
     \quad\text{for all }
     \sigma\in\Omega \text{ and all }
     j<i.
     \]
     By construction, the resulting set $G$ is a maximal subset of $F$ with the following properties:
     \begin{enumerate}
         \item 
         every element of $G$ is an extreme generator of $C$;
         \item 
         if $\ub,\vb\in G$ are distinct elements, then $\cone(\ub)\ne\cone(\sigma(\vb))$ for all $\sigma\in\Omega$. 
     \end{enumerate}
     
     We now show that $\Omega(G)$ is an independent generating set for $C$. By \Cref{lem:extreme}(iii), it suffices to prove that $\rho(C)=\R(\Omega(G)).$ Since $C$ is $\Omega$-invariant, it is easily seen that $\Omega(G)$ consists of extreme generators of $C$. Hence, $\R(\Omega(G))\subseteq\rho(C).$ 
     
     Conversely, let $\cone(\ub)\in \rho(C)$. Since $\Omega(F)$ generates $C$, \Cref{lem:extreme}(ii) implies that $\rho(C)\subseteq \R(\Omega(F))$. Thus, $\cone(\ub)=\cone(\tau(\ub_i))$ for some $\tau\in\Omega$ and $i\in[m]$. By the construction of $G$, either $\ub_i\in G$, or there exist $\sigma\in\Omega$ and $\ub_j\in G$ with $j<i$ such that $\cone(\ub_i)=\cone(\sigma(\ub_j))$. In either case, we obtain $\cone(\ub)=\cone(\pi(\vb))$ for some $\pi\in\Omega$ and $\vb\in G$, showing that $\cone(\ub)\in\R(\Omega(G))$. This concludes the proof.
 \end{proof}

 \begin{lemma}
     \label{lem:symmetric-extreme}
     Assume that $C\subseteq\RR^{(\NN)}$ is a simplicial cone. If $C$ is $\Sym$-equivariantly finitely generated, then either $C=\RR^{(\NN)}_{\ge0}$ or $C=\RR^{(\NN)}_{\le0}$.
 \end{lemma}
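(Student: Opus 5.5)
By \Cref{cor:equi-independence} applied with $\Omega=\Sym$, I would first pick a finite set $G$ such that $\Sym(G)$ is an independent generating set for $C$. We may assume $C\neq\{\nub\}$; if $C=\{\nub\}$ the statement degenerates, and I would treat it as excluded or trivial. Then fix $\ub\in G$ with $\ub\neq\nub$. The whole argument will show that independence of the orbit $\Sym(\ub)$ forces $\ub$ to be a positive multiple of $\eb_1$ or of $-\eb_1$, up to permutation.

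\emph{Step 1: $|\supp(\ub)|=1$.} Suppose $|\supp(\ub)|\ge 2$. I would pick two indices $i\ne j$ in $\supp(\ub)$, a transposition $\tau=(i\,j)$, and large fresh indices $k,l$ outside $\supp(\ub)$. The aim is to exhibit a nontrivial linear dependence among finitely many distinct elements of $\Sym(\ub)$. The cleanest candidate is the "square" relation
\[
\ub-\sigma(\ub)-\pi(\ub)+\pi\sigma(\ub)=\nub,
\]
where $\sigma$ moves index $i$ to a fresh index $k$ and $\pi$ moves $j$ to a fresh index $l$. Because the supports are disjoint, both sides change coordinates $i\to k$ and $j\to l$ independently. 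Concretely, write $\ub=u_i\eb_i+u_j\eb_j+\wb$ with $\wb$ supported off $\{i,j\}$. Then $\ub$, $(i\,k)\ub$, $(j\,l)\ub$, and $(i\,k)(j\,l)\ub$ satisfy
\[
\ub-(i\,k)\ub-(j\,l)\ub+(i\,k)(j\,l)\ub=\nub,
\]
since the expression is $(u_i\eb_i-u_i\eb_k)$ minus the same, with the $j$ and $\wb$ parts cancelling. These four vectors are distinct because $u_i,u_j\neq0$. This contradicts independence of $\Sym(G)$, so every element of $G$ has support size one.

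\emph{Step 2: signs.} By Step 1, each nonzero element of $G$ is $a\eb_i$ with $a\neq0$. Since $C$ is pointed by \Cref{lem:extreme}(i), $C$ cannot contain both $\eb_1$ and $-\eb_1$ directions. By $\Sym$-invariance, if some element of $G$ is $a\eb_i$ with $a>0$ and another is $b\eb_j$ with $b<0$, then both $\eb_1$ and $-\eb_1$ lie in $C$, which is impossible. Hence all nonzero elements of $G$ have positive coefficients, or all have negative coefficients. Then $C=\cone(\Sym(G))$ is $\RR^{(\NN)}_{\ge0}$ or $\RR^{(\NN)}_{\le0}$, respectively.

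The main obstacle is Step 1: making sure the dependence uses genuinely distinct orbit elements and is nontrivial. The four-term relation handles this once two nonzero coordinates and two fresh indices are available, which is always possible in $\RR^{(\NN)}$ because supports are finite.
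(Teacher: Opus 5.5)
Your proof is correct, and it reaches the key claim (every element of $G$ has support size $1$) by a different route than the paper.

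The paper argues by counting in the truncations. With $G\subseteq\RR^p$, $G_n=\Sym(n)(G)$ and $C_n=\cone(G_n)$, independence gives $\dim C_n=|G_n|$. An element of support size at least $2$ yields two new orbit elements when passing from $n-1$ to $n$, obtained by swapping coordinate $n$ with coordinate $1$ or $2$. Hence $\dim C_n\ge\dim C_{n-1}+2$, which eventually exceeds $n$.

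You instead exhibit the explicit relation
\[
\ub-(i\,k)\ub-(j\,l)\ub+(i\,k)(j\,l)\ub=\nub
\]
among four distinct orbit elements. The coefficient computation and the distinctness check are both fine. You should state explicitly that $k\neq l$, and the transposition $(i\,j)$ you introduce is never used.

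Your argument is more elementary and more local. It shows directly that the orbit of any vector with two nonzero coordinates is linearly dependent; already the $\Sym(n)$-orbit is dependent once $n\ge|\supp(\ub)|+2$. It needs neither the truncated cones $C_n$ nor a bound $G\subseteq\RR^p$. The paper's counting argument fits its chain-of-truncations framework and uses only the growth rate of orbits against the ambient dimension. The sign step is the same in both.

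Your caveat about $C=\{\nub\}$ is justified, but that case is not ``trivial''. The zero cone is generated by the empty (independent) set, so it is simplicial and $\Sym$-equivariantly finitely generated. Yet it is neither $\RR^{(\NN)}_{\ge0}$ nor $\RR^{(\NN)}_{\le0}$. It is a genuine exception to the statement as written, which the paper's proof also passes over, since its final step implicitly needs $G\neq\emptyset$. It should be recorded as an explicit hypothesis $C\neq\{\nub\}$.
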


 \begin{proof}
     By \Cref{cor:equi-independence}, there exists a finite subset $G\subseteq C$ such that $\Sym(G)$ is an independent generating set for $C$. We first claim that every element of $G$ has support size $1$. Indeed, choose $p\in\NN$ such that $G\subseteq\RR^p$. For $n\ge p$, set
     \[
     G_n=\Sym(n)(G)
     \quad\text{and}\quad
     C_n=\cone(G_n)\subseteq\RR^n.
     \]
     Since $G_n\subseteq\Sym(G)$, it is an independent set. Hence, $C_n$ is a simplicial cone for all $n\ge p$. Suppose that there exists $\ub\in G$ with $|\supp(\ub)|\ge 2$. Permuting the coordinates if necessary, we may assume that $\ub=(u_1,u_2,\dots)$ with $u_1,u_2\ne0$. 
     
     For $n>p$, let $\vb,\wb$ be the vectors obtained from $\ub$ by exchanging the $n$-th coordinate of $\ub$ (which is $0$) with its first and second coordinates, respectively. Then $\vb\ne\wb$ and
     $
     \vb,\wb\in G_n\setminus G_{n-1}.
     $
     Consequently,
     \[
     \dim\, C_n=|G_n|\ge |G_{n-1}|+2=\dim\, C_{n-1}+2
     \quad\text{for all } n>p.
     \]
    Iterating this inequality shows that $\dim C_n>n$ for $n\gg 0$, which is impossible since $C_n\subseteq\RR^n$ for all $n\ge p$. This contradiction proves that every element of $G$ has support size~$1$. Thus, each $\ub\in G$ is of the form $a_i\eb_i$ for some $a_i\neq 0$ and $i\in\NN$. Since $C$ is pointed by \Cref{lem:extreme}, all the $a_i$ must have the same sign. It follows that $C=\RR^{(\NN)}_{\ge 0}$ or $C=\RR^{(\NN)}_{\le 0}$, as claimed.
 \end{proof}

 We are now in a position to prove \Cref{local-global:simplicial}.

 \begin{proof}[Proof of \Cref{local-global:simplicial}]
     The implications (v)$\Rightarrow$(ii)$\Rightarrow$(i) and (v)$\Rightarrow$(iv) are trivial. Moreover, the equivalence (iii)$\Leftrightarrow$(iv) follows directly from \Cref{lem:symmetric-extreme}.

     (iv)$\Rightarrow$(v): We may assume that $\cone(M)=\RR^{(\NN)}_{\ge0}$, since the remaining case then follows by replacing $M$ with $-M$. Observe that 
     \[
     \cone(M_n)\subseteq\cone(M)\cap\RR^n=\RR^{n}_{\ge0}
     \quad\text{for all } n\ge 1.
     \]
     Evidently, $\cone(M)$ is the limit of the chain $(\cone(M_n))_{n\ge1}$, which has stability index $r$. By \cite[Corollary 5.4]{KLR22} (see also \cite[Lemma 5.1]{KLR22}), we obtain
     \[
     \cone(M_n)=\cone(M)\cap\RR^n=\RR^{n}_{\ge0}
     \quad\text{for all } n\ge r.
     \]

     (i)$\Rightarrow$(iii): Choose $p\ge r$ such that $\cone(M_n)$ is simplicial for all $n\ge p$. Then, in particular, $\cone(M_n)$ is finitely generated for all $n\ge p$. By \Cref{cor:equi-independence}, there exists a finite set $G_p$ such that $\Sym(p)(G_p)$ is an independent generating for $\cone(M_p)$. Since $p\ge r$, the set $\Sym(p+1)(G_p)$ generates $\cone(M_{p+1})$. Applying \Cref{cor:equi-independence} again, we obtain a subset $G_{p+1}\subseteq G_p$ such that $\Sym(p+1)(G_{p+1})$ is an independent generating set for $\cone(M_{p+1})$. Iterating this process yields a nested chain
     \[
     G_p\supseteq G_{p+1}\supseteq G_{p+2}\supseteq\cdots
     \]
     with the property that $\Sym(n)(G_{n})$ is an independent generating set for $\cone(M_{n})$ for all $n\ge p$. Since $G_p$ is finite, this chain stabilizes. Thus, there exist $m\ge p$ and a finite set $G$ such that $\Sym(n)(G)$ is an independent generating set for $\cone(M_{n})$ for all $n\ge m$. Consequently, $\Sym(G)=\bigcup_{n\ge m}\Sym(n)(G)$ is an independent generating set for $\cone(M)$. Hence, $\cone(M)$ is simplicial, and so $M$ is simplicial. This completes the proof.
 \end{proof}

\section{Future directions}\label{sec:concluding remarks}

This paper develops a framework for the study of monoids up to symmetry. In this final section, we briefly outline some directions for future research.

Dual cones of symmetric cones have been investigated in \cite{L25,LR23}, where equivariant analogues of the classical Minkowski--Weyl theorem are established for symmetric cones. It remains an open question whether similar results hold for symmetric monoids; see \cite[Problem~7.1]{L25}. Our forthcoming paper \cite{LRV} addresses this problem by developing a framework for studying duals of monoids in the presence of symmetry.

Another promising direction concerns algebras associated with monoids in the equivariant setting. Understanding their algebraic invariants is of particular interest. As a first step in this direction, we propose the following problem.

\begin{problem}
    Let $\M=(M_{n})_{n\geq 1}$ be a $\Sym$-invariant chain of monoids. Study the Hilbert series of the monoids in this chain.
\end{problem}

\section*{Acknowledgements}
The first author was supported by the Vietnam National Foundation for Science and Technology Development (NAFOSTED) under the grant number 101.04-2025.49. The second and third author were supported by the SPP 2458 “Combinatorial Synergies”, funded by the Deutsche Forschungsgemeinschaft (DFG, German Research Foundation) under the grant number GZ: RO 2504/6-1; AOBJ: 704082.

\end{document}